\documentclass[a4paper, 11pt, reqno]{amsart}

\usepackage{amsmath}
\usepackage{amssymb}
\usepackage{amsthm}

\usepackage[utf8]{inputenc}
\usepackage[T1]{fontenc}
\usepackage{lmodern}
\usepackage[english]{babel}
\usepackage{mathtools}
\usepackage{esdiff}
\usepackage[normalem]{ulem}
\usepackage{cancel}
\usepackage{enumitem}
\usepackage{moreenum}
\usepackage{microtype}
\usepackage{skak}
\usepackage{bbm}
\usepackage[paper=a4paper, margin=1in]{geometry}
\usepackage{stmaryrd}
\usepackage[hidelinks]{hyperref}
\usepackage[dvipsnames]{xcolor}
\usepackage{tikz-cd}

\usepackage{dsfont}
\usepackage{multirow, makecell}
\usepackage{centernot}
\usepackage{float}
\usepackage[new]{old-arrows}
\usepackage{tikz-cd}
\usetikzlibrary{arrows}

\usepackage[numbers]{natbib}

\usepackage{color}

\usepackage{thmtools}
\declaretheoremstyle[headfont=\normalfont\bfseries]{bfthmstyle}
\declaretheorem[numberwithin=section,style=bfthmstyle]{Theorem}
\declaretheorem[sharenumber=Theorem,style=bfthmstyle]{Lemma}
\declaretheorem[sharenumber=Theorem,style=bfthmstyle]{Proposition}
\declaretheorem[sharenumber=Theorem,style=bfthmstyle]{Corollary}

\declaretheoremstyle[headfont=\normalfont\bfseries,qed={$\diamond$}]{bfdefstyle}
\declaretheorem[sharenumber=Theorem,style=bfdefstyle]{Definition}
\declaretheorem[sharenumber=Theorem,style=bfdefstyle]{Definition-Proposition}
\declaretheorem[sharenumber=Theorem,style=bfdefstyle]{Definition-Lemma}
\declaretheorem[sharenumber=Theorem,style=bfdefstyle]{Example}
\declaretheorem[sharenumber=Theorem,style=bfdefstyle]{Notation}

\declaretheorem[numbered=no,style=bfdefstyle]{Properties}

\declaretheoremstyle[headfont=\normalfont\bfseries,qed={$\diamond$}]{bfremstyle}
\declaretheorem[sharenumber=Theorem,style=bfdefstyle]{Remark}

\newcommand{\dom}{\mathrm{dom}\,}
\newcommand{\rk}{\mathrm{rk}\,}

\newcommand{\abs}[1]{\left\vert #1 \right\vert}

\newcommand{\bigset}[2]{\left\lbrace #1 \; \middle\vert \; #2 \right\rbrace}

\newcommand{\rest}[1]{|_{#1}}

\newcommand{\dd}[1]{\tfrac{\mathrm{d}}{\mathrm{d}#1}}
\newcommand{\ddt}{\dd{t}}
\newcommand{\ddtn}[1]{\frac{\mathrm{d}^{#1}}{\mathrm{d}t^{#1}}}

\newcommand{\eanf}[1]{``#1''}

\newcommand{\ab}{\\[1.5ex]}

\newcommand{\R}{\mathbb{R}}
\newcommand{\N}{\mathbb{N}}

\newcommand{\T}{\mathbb{T}}

\newcommand{\Cinf}{{\mathcal{C}^\infty}}
\newcommand{\Cinfpw}{{\mathcal{C}^\infty_\mathrm{pw}}}

\newcommand{\pw}{_\mathrm{pw}}

\newcommand{\B}{\mathcal{B}}

\newcommand{\G}{\mathcal{G}}
\newcommand{\M}{\mathcal{M}}

\newcommand{\MD}[2]{{\M^{#1}[D]^{#2}}}
\newcommand{\MDmn}{\MD{}{m\times n}}

\newcommand{\Ca}{\mathcal{C}}

\newcommand{\op}{^{\mathrm{op}}}

\newcommand{\Vect}{\mathbf{Vect}}

\newcommand{\Int}{\mathbf{Int}}

\newcommand{\CInt}[1]{{\Ca^{#1}_\Int}}
\newcommand{\CIntp}[1]{{\Ca^{#1}_\Intp}}

\newcommand{\Intp}{{\mathbf{pInt}}}
\newcommand{\Intpp}{{\mathbf{(p)Int}}}

\newcommand{\ext}{\operatorname{ext}}
\newcommand{\transp}{^\mathsf{T}}\newcommand{\subsheaf}{\mathrel{\preccurlyeq_{\mathrm{sh}}}}

\makeatletter
\tikzset{
  edge node/.code={      \expandafter\def\expandafter\tikz@tonodes\expandafter{\tikz@tonodes #1}}}
\makeatother
\tikzset{
  subseteq/.style={
    draw=none,
    edge node={node [sloped, allow upside down, auto=false]{$\subseteq$}}},
  Subseteq/.style={
    draw=none,
    every to/.append style={
      edge node={node [sloped, allow upside down, auto=false]{$\subseteq$}}}
  }
}

\makeatletter
\DeclareRobustCommand
  \myvdots{\vbox{\baselineskip4\p@ \lineskiplimit\z@
    \hbox{.}\hbox{.}\hbox{.}}}
\makeatother

\renewcommand{\exp}[1]{e^{#1}}

\declaretheoremstyle[
notefont=\normalfont, notebraces={}{},
headformat=\NUMBER~\NAME~\NOTE
]{nopar}

\numberwithin{equation}{section}



\begin{document}

\title{On local solutions to time-varying linear DAEs}

\author{Alexander Samuel Bock}

\address{Institut für Mathematik, Technische Universität Ilmenau, Weimarer Straße 25, 98693 Ilmenau, Thüringen, Germany}

\email{alexander-samuel.bock@tu-ilmenau.de}

\keywords{differential-algebraic equations, time-varying linear systems, behavioural approach, controllability, local solutions, sheaves}

\begin{abstract}

This paper presents a framework for local solutions to time-varying linear differential-algebraic equations (DAEs) with real meromorphic coefficients. The local solutions on compact intervals form a sheaf. This permits a simple definition of controllability in the sense of Jan~C.~Willems. We prove that this notion is equivalent to the established global notion by giving an algebraic characterization based on the Teichmüller-Nakayama form. Finally, we study conditions under which local solutions admit extension, which is necessary for controllability.

\end{abstract}

\setcounter{tocdepth}{1}

\maketitle
\tableofcontents

\section{Introduction}\label{sec:intro}

    \noindent In this paper, we develop a local framework for solutions to time-varying linear systems of the form
    \begin{equation}\label{first}
        R(\ddt) w = \sum_{i=0}^{N} R_i \cdot \ddtn{i} w = 0,
    \end{equation}
    where $R_i \in \M^{m \times n}$ is an $m \times n$ matrix with real meromorphic entries for $i = 0, \ldots, N$. Such systems have been studied by \citet{part1} and by \citet{zerz}. The ring of differential polynomials $\M[D]$ over the field of real meromorphic functions admits a rich algebraic structure that allows a factorization of differential matrix polynomials $R(D) = \sum_{i=0}^N R_i D^i \in \MD{m\times n}{}\cong\MD{}{m\times n}$, the Teichmüller-Nakayama form, which has proven useful for the analysis of such systems.

    The aim of this work is to determine a suitable vector space of local solutions $\mathcal{S}_R[a, b]$ on a finite interval $[a, b]$, $a < b$.

    \citet{part1} and \citet{zerz} consider global solutions, i.e., solutions $w \in \Cinf(\R \setminus \T, \R^n)$, where $\T \subset \R$ is a discrete (locally finite) set of singularities. The reason to allow such singularities is that solutions of system~\eqref{first} may exhibit finite escape times. For example, $w(t) = t^{-1}$ solves $t \ddt w + w = 0$. In general, singularities can occur, but do not necessarily, whenever the rank of the leading coefficient drops, i.e., in a discrete set.

    Considering local solutions instead of global ones has several advantages.
    \renewcommand{\descriptionlabel}[1]{      \hspace\labelsep \upshape\bfseries #1    }
    \begin{description}
        \item[First,] from a practical perspective, trajectories of physical systems are always measured on finite time intervals $[a, b]$.

        \item[Secondly,] certain concepts of systems theory can be formulated more directly in the local setting. An example is \emph{behavioural controllability}. In its original form, introduced by \citet[Definition 5.2.2]{book:willems} for time-invariant systems (see also \cite[Definition 2.1]{Berger2013} for an overview of established notions of controllability), behavioural controllability is formulated for a set $B$, the so-called \emph{behaviour}, of global (continuous) solutions to equation~\eqref{first}:
        \begin{equation}\label{eq:contr_global}
            \forall f, h \in B \; \forall T_0 \in \R \; \exists T_1 > T_0 \; \exists g \in B \colon \quad
            g(t) = \begin{cases}
                f(t), & t < T_0,\\
                h(t), & t > T_1.
            \end{cases}
        \end{equation}
        Since for time-varying systems global solutions may exhibit singularities, one must require additionally that $g$ is continuous on $(T_0 - \varepsilon, T_1 + \varepsilon)$ for some $\varepsilon > 0$; otherwise the property~\eqref{eq:contr_global} becomes trivial. Such a continuity requirement is imposed, with slight modifications, in \cite{part1} and \cite{zerz}. Thus, the interconnection $g\rest{(T_0 - \varepsilon, T_1 + \varepsilon)}$ is typically required to be more regular than $f\rest{(-\infty, T_0 - \varepsilon]}$ and $h\rest{[T_1 + \varepsilon, \infty)}$. Moreover, global solutions contain data that is irrelevant for deciding controllability, for instance the values of the functions on $(-\infty, T_0 - \varepsilon)$.

        In this paper, we introduce a conceptually cleaner definition of behavioural controllability in the form
        \begin{equation}\label{eq:contr_local}
            \forall a < b < c < d \; \forall f \in \mathcal{S}_R[a, b] \; \forall h \in \mathcal{S}_R[c, d] \; \exists g \in \mathcal{S}_R[b, c] \colon \quad f \cup g \cup h \in \mathcal{S}_R[a, d],
        \end{equation}
        where $f \cup g \cup h$ denotes the concatenation of $f$, $g$, and $h$. We investigate controllability in this sense and provide an algebraic characterization in the local framework, thereby demonstrating that the present setting is well suited for the local analysis of differential systems.

        \item[Thirdly,] the local framework is, in a sense, more general: there may exist a local solution $w \in \Cinf([a, b], \R^n)$ for which no global solution $\hat w \in \Cinf(\R \setminus \T, \R^n)$ exists with $\T \cap [a, b] = \emptyset$ and $\hat w\rest{[a, b]} = w$. In fact, whether smooth local solutions on closed intervals are always continuable is, to our knowledge, still an open question (although related results under stronger analytic assumptions are obtained by \citet{Wasow1987}). Part of this paper is devoted to identifying cases where continuation can be ensured because it is a necessary condition for controllability, as can easily be shown.
    \end{description}

    We consider the real vector space of partial real functions on $[a, b]$ with finitely many singularities
    \[
        \Ca^{-1}[a, b] \coloneqq \bigset{f\colon [a, b] \setminus E \to \R}{E \subseteq [a, b] \text{ finite}} / {\sim}, \qquad -\infty < a < b < \infty,
    \]
    where
    \[
        f \sim g \quad :\Longleftrightarrow \quad \exists E \subseteq [a, b] \text{ finite } \forall t \in \left(\dom f \cap \dom g\right) \setminus E \colon f(t) = g(t).
    \]
    As in the Lebesgue space $L^p(\R)$, equivalence classes are identified with their representative functions. The subspace of piecewise $N$-times continuously differentiable functions on $[a, b]$ is defined as
    \[
        \Ca^N\pw [a, b] \coloneqq \bigset{w \in \Ca^{-1}[a, b]}{\exists f\in w \colon f \in \Ca^N(\dom f, \R)}.
    \]
    The restriction map
    \[
        \M \to \Ca^N\pw [a, b],
        \quad
        f \mapsto f\rest{[a, b]} \coloneqq [f\rest{[a, b] \cap \dom f}]_{\sim}
    \]
    is well-defined and linear. For $w \in \left(\Ca^N\pw [a, b]\right)^n$, we therefore simplify notation and write $R_i \ddtn{i} w$ instead of $R_i\rest{[a, b]} \ddtn{i} w$, so that the expression
    \[
        R(\ddt) w = \sum_{i=0}^{N} R_i \cdot \ddtn{i} w
    \]
    is meaningful. The set of solutions on $[a, b]$ is then given by
    \[
        \mathcal{S}_R[a, b] \coloneqq \bigset{w \in \left(\Ca^N\pw [a, b]\right)^n}{R(\ddt) w = 0}.
    \]
    Since system~\eqref{first} is linear, $\mathcal{S}_R[a, b]$ is a subspace of $\left(\Ca^N\pw [a, b]\right)^n$.

    \begin{Example}\label{ex:first_example}
        Consider the equation
        \[
            (t^4, 0) \, \ddtn{2} w + (4t^3, 0) \, \ddtn{} w + (t^2, -1) \, w = 0.
        \]
        The corresponding differential matrix polynomial is
        \[
            r(D) = (t^4, 0) \, D^2 + (4t^3, 0) \, D + (t^2, -1) \in \MD{1\times 2}{}.
        \]
        One solution $w \in \mathcal{S}_R[-1, 1]$ is given by the function
        \[
            f(t) = \left( \abs{t}^{3/2}, \frac{31}{4} t^2 \abs{t}^{3/2} \right) \transp, \qquad t \in [-1, 1],
        \]
        via $w \coloneqq [f]_{\sim} \in \mathcal{S}_R[-1, 1]$. To simplify our notation, we will not distinguish between $f$ and $w$ and just write $f \in \mathcal{S}_R[-1, 1]$. Note that $f \notin \left(\Ca^2[-1, 1]\right)^2$ because $f$ has a singularity in $t = 0$, where the regularity drops to $\Ca^1$.

        Singularities typically occur where the rank of the leading coefficient drops, here at $t = 0$. At such points, singularities are often \eanf{unavoidable} in the sense that there is exactly one $\Ca^2$ continuation of $f\rest{[-1, -1/2]}$ to $[-1, 0)$ that is a solution, and this continuation is of regularity $\Ca^1$ at $t = 0$. However, our solution space also allows for avoidable singularities. For example,
        \[
            t \mapsto \begin{cases}
                f(t), & t \in [-1, c], \\
                (0, 0)\transp, & t \in (c, 1],
            \end{cases}
            \qquad \text{where $c \in (-1, 1)$,}
        \]
        is a solution in $\mathcal{S}_R[-1, 1]$.

        In a sense, unavoidable singularities are the far more interesting objects: to the author's knowledge, no algebraic method has yet been discovered for determining the set of unavoidable singularities given $R(D)$. \qedhere

    \end{Example}

    The assignment $\mathcal{S}_R \colon [a, b] \mapsto \mathcal{S}_R[a, b]$ with $a < b$ satisfies the following properties.

    \begin{Properties}\label{prop}
        \begin{enumerate}[label=(P\arabic*)]
            \item \label{prop:i} Restrictions of solutions are again solutions, i.e., there are well-defined restriction maps
            \[
                \mathcal{S}_R[a, d] \to \mathcal{S}_R[b, c], \quad w = [g]_{\sim} \mapsto w\rest{[b, c]} \coloneqq [g\rest{\dom g \cap [b, c]}]_{\sim},
            \]
            for $a \leq b < c \leq d$.
            \item \label{prop:ii} Restrictions compose, i.e., for $a \leq b \leq c < d \leq e \leq f$ and for all $w \in \mathcal{S}_R[a, f]$
            \[
                w\rest{[c, d]} = w\rest{[b, e]}\rest{[c, d]}.
            \]
            \item \label{prop:iii} \emph{Compatible solutions} are uniquely glueable, i.e., if $f \in \mathcal{S}_R[a, c]$ and $g \in \mathcal{S}_R[b, d]$ with $a \leq b < c \leq d$ are equal on $[b, c]$, i.e.,
            \[
                f\rest{[b, c]} = g\rest{[b, c]},
            \]
            then there exists a unique $h \in \mathcal{S}_R[a, d]$ with $h\rest{[a, c]} = f$ and $h\rest{[b, d]} = g$.
            \item \label{prop:iv} $\mathcal{S}_R[a, b]$ is a real vector space.
            \item \label{prop:v} Every restriction map is linear. \qedhere
        \end{enumerate}
    \end{Properties}

    \hyperref[prop]{Properties}~\ref{prop:i}-\ref{prop:iii} reflect the fundamental behaviour expected of solutions to differential equations. \hyperref[prop]{Properties}~\ref{prop:iv}-\ref{prop:v} express additional structure specific to linear systems.

    In modern mathematical language, the assignment \([a, b]\mapsto\mathcal{S}_R[a, b]\) is called a \emph{presheaf} if \ref{prop:i}-\ref{prop:ii} hold, and a \emph{sheaf} if \ref{prop:i}-\ref{prop:iii} hold; if in addition~\ref{prop:iv}-\ref{prop:v} are satisfied, one speaks of a \emph{presheaf} or \emph{sheaf of vector spaces}.

    Note, however, that $\mathcal{S}_R$ is not suitable for studying controllability as defined in property~\eqref{eq:contr_local}. Indeed, for $a < b < c < d$, any triple of solutions $f \in \mathcal{S}_R[a, b]$, $g \in \mathcal{S}_R[b, c]$, and $h \in \mathcal{S}_R[c, d]$ is compatible, in the sense that there exists a unique $e \in \mathcal{S}_R[a, d]$ such that
    \[
        e\rest{[a, b]} = f,
        \quad
        e\rest{[b, c]} = g,
        \quad
        e\rest{[c, d]} = h.
    \]
    Intuitively, one simply glues $f$ and $g$ at $b$, and $g$ and $h$ at $c$, while allowing possible singularities at $b$ and $c$. Consequently, every system~\eqref{first} is controllable under the present definition.

    The issue is analogous to the one in the global setting (see property~\eqref{eq:contr_global}), where the solution space is too large unless additional regularity is imposed on $[b, c]$. There, one typically requires continuity, which forces $g$ to be more regular than $f$ and $h$. If continuity were imposed on all global solutions instead, information about singularities would be lost.

    In the local setting, this difficulty does not arise. Since the analysis is restricted to compact intervals, singularities can be excluded by shrinking the interval $[a, d]$. Thus one may impose the same regularity on $f$, $g$, and $h$, for instance continuity, without losing structural information, as will be discussed in Remark~\ref{rem:singularities}.

    Before proposing a definition of a solution space suited for controllability, note that $\Ca^N[a, b]$ can be viewed as a subspace of $\Ca^N\pw[a, b]$ by identifying $f \in \Ca^N[a, b]$ with $[f]_{\sim} \in \Ca^N\pw[a, b]$ (as done in Example~\ref{ex:first_example}).

    Using this convention, we define, for $-1 \leq L \leq M \leq N \in \N_0 \cup \{\infty\}$,
    \[
        \Ca^{L, M, N}[a, b] \coloneqq \bigset{f \in \Ca^L[a, b] \cap \Ca^N\pw[a, b]}{\begin{array}{l}
            \exists E \subseteq [a, b] \text{ finite } \forall (x, y) \subseteq (a, b)\setminus E \colon \\
             f\rest{[x, y]} \in \Ca^M[x, y]
        \end{array}}.
    \]
    The triple $(L, M, N)$ is referred to as a \emph{regularity triple}. This induces the solution space
    \[
        \mathcal{S}_R^{L, M, N}[a, b] \coloneqq \bigset{w \in \left(\Ca^{L, M, N}[a, b]\right)^n}{R(\ddt) w = 0}.
    \]
    It is straightforward to show that $\mathcal{S}_R^{L, M, N}$ also satisfies \hyperref[prop]{Properties}~\ref{prop:i}-\ref{prop:v}, i.e., it is a sheaf of vector spaces.

    Controllability of $\mathcal{S}_R^{L, M, N}$ in the sense of~\eqref{eq:contr_local} is a nontrivial property: in Theorem~\ref{thm:main}, we show that the algebraic characterization of controllability in the global setting (\cite[Theorem~3.2]{part1}, \cite[Theorems~6,~7]{zerz}) also holds in the local setting. This demonstrates that the two notions of controllability coincide.

    We close this introduction with a few remarks and examples concerning the choice of the solution sheaf $\Ca^{L, M, N}$.

    \begin{Remark}
        For $f \in \Ca^{L, M, N}[a, b]$, $L$ specifies the overall regularity of $f$, or equivalently, the minimal smoothness required at singularities. $M$ determines the regularity of each piece $f\rest{[x, y]}$, i.e., the number of derivatives that exist at subinterval boundaries. $N$ determines the regularity within each piece and thus the smoothness almost everywhere on $[a, b]$.

        For our purposes, $L$ controls the strength of the compatibility notion, or equivalently, the regularity of the control, while $M$ is needed for technical reasons, as we will later take $M = \infty$ to ensure continuability of solutions.

        As usual, the greater the parameters $L, M, N$, the more regular functions are required to be, and the smaller the corresponding space $\Ca^{L, M, N}[a, b]$. That means, we have
        \[
            \Ca^{L, M, N}[a, b] \supseteq \Ca^{L', M', N'}[a, b] \quad \Longleftrightarrow \quad L \leq L'\ \land\  M \leq M'\ \land\ N \leq N'.
        \]
        Note that $\Ca^{N, N, N}[a, b] = \Ca^{N}[a, b]$ and $\Ca^{-1, -1, N}[a, b] = \Ca^N\pw[a, b]$.
    \end{Remark}

    \begin{Example}
        We give a few examples of functions $f \in \Cinfpw[-1, 1]$ with a single singularity at $t = 0$ and their regularity triples, i.e., the maximal parameters $L, M, N$ such that $f \in \Ca^{L, M, N}[-1, 1]$. Therefore, in every example, we have $N = \infty$, and $L$ and $M$ are solely determined by the regularity of the singularity at $t = 0$. We have
        \begin{enumerate}[label=(\roman*{})]

            \item $\operatorname{sgn} t \rest{[-1, 1]}\in \Ca^{-1, \infty, \infty}[-1, 1]$, the jump discontinuity at $t = 0$ implies $L = -1$, and the halves are $\Ca^\infty[-1, 0]$ and $\Ca^\infty[0, 1]$, respectively ($M = \infty$);
            \item $\abs{t} \rest{[-1, 1]} \in \Ca^{0, \infty, \infty}[-1, 1]$, the function is continuous ($L=0$), and the halves are $\Ca^\infty[-1, 0]$ and $\Ca^\infty[0, 1]$, respectively ($M = \infty$);
            \item $\abs{t}^{1/2} \rest{[-1, 1]} \in \Ca^{0, 0, \infty}[-1, 1]$, the function is continuous ($L=0$), and the halves are $\Ca^0[-1, 0]$ and $\Ca^0[0, 1]$, respectively ($M = 0$);
            \item $t^{-1}\rest{[-1, 1]}\in \Ca^{-1, -1, \infty}[-1, 1]$, the pole at $t = 0$ implies $L = M = -1$. \qedhere
        \end{enumerate}
    \end{Example}

    \begin{Remark}
        $\Ca^{L, M, N}$-functions, as well as global or slightly modified variants, appear in the literature, although typically only for specific choices of the regularity triple $(L, M, N)$. Table~\ref{tab:regularity_examples} lists a non-exhaustive selection of existing works where $\Ca^{L, M, N}$ or closely related function spaces are utilized.

        \begin{table}[H]
            \centering
            \setcellgapes{4pt}
            \makegapedcells

            \begin{tabular}{|p{0.8cm}p{0.8cm}p{0.8cm}|l|l|}
                \hline
                \centering $L$ & \centering $M$ & \centering $N$ & Context & Appearance \\
                \hline \hline
                \centering $-1$ & \centering $-1$ & \centering $\infty$ & time-varying DAEs &
                \makecell[l]{in \cite{part1} as \eanf{piecewise smooth, $\Cinfpw$},\\
                in \cite{zerz} as \eanf{$\mathcal A$}} \\
                \hline
                \centering $-1$ & \centering $\infty$ & \centering $\infty$ & distributional DAEs &
                \makecell[l]{in \cite[Definition~2.2.12]{trennphd} as\\
                \eanf{piecewise smooth, $\Cinfpw$}} \\
                \hline
                \centering $0$ & \centering $0$ & \centering $\infty$ & PDEs &
                \makecell[l]{in \cite{Guelmame_2022} as \eanf{weakly singular}} \\
                \hline
                \centering $0$ & \centering $k$ & \centering $k$ & nonsmooth analysis &
                \makecell[l]{similarly in \cite{Scholtes2012} as \eanf{$PC^k$}} \\
                \hline
                \centering $0$ & \centering $\infty$ & \centering $\infty$ & distributional DAEs &
                \makecell[l]{in \cite[Theorem~4.2.15]{trennphd}} \\
                \hline
                \centering $k$ & \centering $k+1$ & \centering $k+1$ & approximation theory &
                \makecell[l]{in \cite{levin:approximating_pw_smooth} as \eanf{$PC^{k+1}$}} \\
                \hline
            \end{tabular}
            \caption{Examples of regularity triples $(L, M, N)$ and their appearance in the literature.}
            \label{tab:regularity_examples}
        \end{table}

        With the exception of \cite{Scholtes2012} and \cite{levin:approximating_pw_smooth}, these works consistently consider global function spaces and introduce them directly as solution spaces for differential equations.

        Other notions of solutions with singularities appear in the literature, most notably Carathéodory and Filippov solutions for ordinary differential equations with discontinuous right-hand sides, see~\cite{filippov1988}. Both are absolutely continuous and thus conceptually comparable to $\Ca^{0, 0, 1}$, but admit singularities on null sets rather than finite sets as in the present framework.
    \end{Remark}

    \begin{Remark}\label{rem:singularities}
        Although continuous sheaves on compact intervals, such as $\Ca^{L, M, N}$ with $0 \leq L \leq M \leq N$, do not admit singularities in their elements (every function $f \in \Ca^{L, M, N}[a, b]$ is at least continuous on $[a, b]$), information about singularities is still reflected in the sheaf.

        For example, the differential polynomial $r(D) = tD - 1$ yields the equation
        \[
            t \cdot \ddt f - f = 0.
        \]
        For $f \in \Ca^\infty[a, b]$, the solutions are given by
        \[
            f = \begin{cases}
                c \cdot \frac{1}{t}, \quad c \in \R, & 0 \notin [a, b], \\
                0, &0 \in [a, b].
            \end{cases}
        \]
        In particular,
        \[
            \dim \bigset{f \in \Ca^\infty[a, b]}{r(\ddt) f = 0}
            = \begin{cases}
                1, & 0 \notin [a, b], \\
                0, & 0 \in [a, b],
            \end{cases}
        \]
        so the dimension of the solution space drops whenever the interval $[a, b]$ contains $0$. This reflects the singularity of $\frac{1}{t}$ at $0$, which cannot be represented locally by a smooth function. Thus, the presence of the singularity is still detected by the sheaf through the absence of nontrivial local solutions. This is not a limitation of the framework, but a consequence of the choice $L = M = N = \infty$, which excludes functions with poles.
    \end{Remark}

    The remainder of this paper is organized as follows.

    Section~\ref{sec:loc_sol} motivates and completes the formulation of our local framework. In Subsection~\ref{subsec:examples}, we examine two mechanical systems described by differential equations with input. The aim is to demonstrate both the strengths and the limitations of the sheaf $\mathcal{C}^{L, M, N}$.

    In Subsection~\ref{subsec:Int_comp}, we extend $\mathcal{C}^{L, M, N}$ defined on the category $\Intp$ of proper compact intervals to the category $\Int$ of all compact intervals including functions on degenerate intervals $[b, b]$. This addresses the issue of \emph{compatibility at a point}: in $\mathcal{C}^{L, M, N}$, compatibility of two functions $f$ and $g$ defined on $[a, b]$ and $[b, c]$ cannot be decided using Property~\ref{prop:iii}. Instead, $f$ and $g$ together form a function in $\mathcal{C}^{L, M, N}[a, c]$ if and only if the connection at $b$ is of regularity $\Ca^L$, we say $f$ and $g$ are \emph{$\Ca^L$-compatible (at $b$)}. By extending $\mathcal{C}^{L, M, N}$ to $\Int$, compatibility at a point~$b$ can be expressed as an intrinsic property of the sheaf.

    In Section~\ref{sec:contr}, we apply the local framework to systems theory, with a focus on controllability. Subsection~\ref{subsec:contr_main} introduces the notion of controllability for presheaves of vector spaces on $\Int$ and contains controllability results for sheaves of a specific regularity, which we call \emph{local behaviours}.

    In Subsection~\ref{subsec:sheaf_morphisms}, we interpret the differential operator $R(\ddt)$ as a morphism of sheaves. This setup allows one to view the corresponding local behaviour of solutions as the kernel of this sheaf morphism. In particular, this viewpoint prepares us to derive an algebraic characterization of controllability for systems with meromorphic coefficients (Theorem~\ref{thm:main}) in Subsection~\ref{subsec:tnnf}, utilizing the Teichmüller-Nakayama normal form.

    The \hyperref[app]{appendix} provides additional categorical background that justifies the use of the term \eanf{sheaf} in the main text. In particular, it verifies that the definition of sheaves using \hyperref[prop]{Properties}~\ref{prop:i}-\ref{prop:v} is consistent with the standard sheaf-theoretic framework. It also contains the proof of Theorem~\ref{thm:Int&Intp} that relies on more categorical language and was therefore moved out of the main exposition.

    In order to keep the paper accessible to readers unfamiliar with abstract sheaf theory, we use concrete definitions throughout (for example, defining sheaves on the category $\Intp$ via \hyperref[prop]{Properties}~\ref{prop:i}-\ref{prop:v}). In fact, the text is intended to remain readable without prior knowledge of categories, functors, or sheaves, although familiarity with these notions is helpful. In most places, the term \eanf{category} may simply be replaced by \eanf{set}; the term \eanf{functor} is not used; and the term \eanf{sheaf} is always introduced through explicit definitions.

\section{Properties of the local framework}\label{sec:loc_sol}

    \subsection{Physical interpretation and modelling}\label{subsec:examples}

    \noindent While the primary application of our framework is behavioural controllability (discussed in Section~\ref{sec:contr}), we first introduce two concepts that demonstrate the framework's broader utility: the behavioural approach proposed by \citet{book:willems} and the concept of reinitialization. The behavioural approach is of particular importance, as it provides the conceptual foundation for the notion of behavioural controllability.

    In its simplest case, the behavioural approach \citep{book:willems} transforms an inhomogeneous linear differential-algebraic equation of the form
    \begin{equation} \label{eq:inhom}
        \sum_{i = 0}^N A_i x^{(i)} = G u
    \end{equation}
    into a homogeneous linear differential-algebraic equation of the form
    \begin{equation} \label{eq:hom}
        \sum_{i = 0}^N R_i w^{(i)} = 0,
    \end{equation}
    by setting $w \coloneqq (x\transp, u\transp)\transp$, $R_i \coloneqq (A_i, 0)$ for $i = 1, \dots, N$, and $R_0 \coloneqq (A_0, -G)$. In the classical formulation~\eqref{eq:inhom}, $x$ and $u$ play very different roles, since only $x$ is acted on by differential operators. In contrast, the behavioural approach treats $u$ as part of the solution~$(x,u)$.

    The primary advantage of this formulation is that equation~\eqref{eq:hom} is algebraically simpler than equation~\eqref{eq:inhom}; for instance, the solution space of the homogeneous equation constitutes a linear vector space. A potential disadvantage, however, is that this representation requires the input $u$ to be $N$ times differentiable for equation~\eqref{eq:hom} to remain well-defined. In our framework, we adopt the compromise that $u$ must be $N$ times differentiable except at a discrete set of singularities, such as jumps. We demonstrate in Example~\ref{ex:physics}~\ref{ex:physics:(i)} that our framework enables us to specify the exact class of irregularities permitted in the input, ensuring that the solution space remains expressive enough to align with physically sensible behaviours.

    The second concept, reinitialization, concerns the process of halting a dynamical system at time $t$ and restarting it using the values $w(t), \dot{w}(t), \dots, w^{(N-1)}(t)$ as new initial conditions. The Picard-Lindelöf theorem ensures that, under suitable regularity assumptions, a reinitialized solution follows the same trajectory as the original one, that is, the system behaves as if no external interaction had occurred. Specifically, if two solutions are $\Ca^{N-1}$-compatible in $t$, then they are even $\Ca^N$-compatible, meaning no drop in regularity can occur and the future is uniquely determined. In these scenarios, all solutions are $\Ca^N$ and the added expressiveness of our framework via the regularity triple $(L, M, N)$ is unnecessary.

    However, our framework also applies for situations where Picard-Lindelöf fails. This allows one to parametrize non-unique continuations that are physically sensible, as demonstrated in Example~\ref{ex:physics}~\ref{ex:physics:(ii)}.

    \begin{Example}\label{ex:physics}
        \begin{enumerate}[label=(\roman*{})]
            \item \label{ex:physics:(i)} Let a particle of mass $m$ at position $x(t)$ be subjected to a time-dependent force $u(t)$, which acts as an external input. The system is observed over a finite time interval $[a, b]$ with $a < b$. Newton's second law yields the inhomogeneous differential equation
            \[
                m\, \ddot{x}(t) = u(t).
            \]
            In our behavioural framework, we only consider homogeneous equations by treating the trajectory $x$ and the input $u$ together as a behaviour $w(t) \coloneqq (x(t), u(t))\transp \in \R^2$, leading to the equivalent equation
            \[
                (m, 0) \, \ddot{w}(t) + (0, 0) \, \dot{w}(t) + (0, -1) \, w(t) = 0.
            \]
            Some regularity of the input must be assumed, as is typical in the behavioural approach. There are, however, situations where an irregular input must be modelled. For example, one may consider a force that is bounded and piecewise $\Ca^2$ with jump discontinuities. It is physically reasonable to assume that only finitely many such discontinuities occur in $[a, b]$. In our setting, these properties are captured by the assumption
            \[
                u \in \Ca^{-1, 0, 2}[a, b].
            \]
            Note that this condition also ensures that each piece extends continuously to the boundary, which is slightly stronger than boundedness.

            Integration yields that the position $x$ is more regular than the force $u$; specifically,
            \[
                x \in \Ca^{1, 2, 4}[a, b] \subset \Ca^{-1, 0, 2}[a, b].
            \]
            While one could define the solution space for $w$ precisely as the product space
            \[
                \Ca^{1, 2, 4}[a, b] \times \Ca^{-1, 0, 2}[a, b],
            \]
            doing so would require tracking six parameters (and in general $3n$ parameters) to describe the regularity of the individual components. To simplify the framework throughout this paper, we instead adopt the minimal regularity of all components. Consequently, we only require that
            \[
                w \in \left(\Ca^{-1, 0, 2}[a, b]\right)^2.
            \]

            \item \label{ex:physics:(ii)}

            Consider a pendulum of mass $m = 1$ with a time-dependent length $\ell(t) \geq 0$, subject to an external torque $u(t)$. Such a setup can indeed be realized by a slight modification of one of the experimental setups reviewed in \cite[Section~2]{Yakubu2022}. For small angles $x(t)$, the system is modelled by the time-varying linear differential equation
            \[
                \ell(t)^2 \, \ddot{x}(t) + 2 \ell(t)\dot{\ell}(t) \, \dot{x}(t) + \ell(t) \, x(t) = u(t),
            \]
            where the gravitational constant is normalized to $g = 1$. The system is observed on the interval $[-\varepsilon, \varepsilon]$ with small $\varepsilon > 0$.

            For $\ell(t) \coloneqq t^2 \in \Ca^{\infty}[-\varepsilon, \varepsilon]$ and
            \[
                u(t) \coloneqq \frac{31}{4} t^2 |t|^{3/2} \in \Ca^{3, 3, \infty}[-\varepsilon, \varepsilon],
            \]
            the equation reduces to
            \[
                t^2  \, \ddot{x}(t) + 4 t  \, \dot{x}(t) + x(t) = \frac{31}{4} |t|^{3/2}.
            \]
            The function
            \[
                x(t) = |t|^{3/2} \in \Ca^{1, 1, \infty}[-\varepsilon, \varepsilon]
            \]
            is a solution to this equation, and the behaviour
            \[
                w \coloneqq (x, u)\transp \in \left(\Ca^{1, 1, \infty}[-\varepsilon, \varepsilon]\right)^2
            \]
            satisfying
            \[
                (t^4, 0)  \, \ddot w(t) + (4t^3, 0)  \, \dot w(t) + (t^2, -1)  \, w(t) = 0
            \]
            inherits the regularity of $x$.

            The Picard-Lindelöf assumptions fail at $t = 0$ as the leading coefficient $(t^4, 0)$ drops in rank. As a result, reinitializing the system with the initial values $w(0) = 0$ and $\dot{w}(0) = 0$ does not yield a unique continuation. More precisely, if a solution $w\rest{[-\varepsilon, 0]}$ is to be continued to a solution in $\left(\Ca^{1, 1, \infty}[-\varepsilon, \varepsilon]\right)^2$, the future trajectory is not uniquely determined: for any $c \in \mathbb{R}$, $w\rest{[-\varepsilon, 0]}$ and $c \cdot w\rest{[0, \varepsilon]}$ are $\Ca^1$-compatible, and their concatenation lies in $\left(\Ca^{1, 1, \infty}[-\varepsilon, \varepsilon]\right)^2$.

            In this example, the present framework allows one to choose $L=1$, thereby admitting physically sensible continuous solutions with irregularities while excluding discontinuous solutions with jumps.

            Let us briefly remark on the physical intuition.

            First, the singular behaviour is not an artifact of linearization. In fact, for the non-linear model
            \[
                 \ell(t)^2 \, \ddot{x}(t) + 2 \ell(t)\dot{\ell}(t) \, \dot{x}(t) + \ell(t) \, \sin(x(t)) = u(t),
            \]
            plugging in $\ell(t) = t^2$ and $x(t) = \abs{t}^{3/2}$, we can compute the required control input
            \[
                u(t) = \frac{27}{4} t^2 \abs{t}^{3/2} + t^2 \sin\big(\abs{t}^{3/2}\big) \in \Ca^{3, 3, \infty}[-\varepsilon, \varepsilon].
            \]
            This produces the same qualitative behaviour as the linear case. While the solution space is not linear, the initial values $w(0) = 0$ and $\dot{w}(0) = 0$ still allow for a family of solutions
            \[
                w(t) = \left(c \abs{t}^{3/2}, c \frac{27}{4} t^2 \abs{t}^{3/2} + t^2 \sin\big(c\abs{t}^{3/2}\big) \right)\transp, \qquad c \in \R,
            \]
            which confirms that the non-uniqueness persists in the non-linear model.

            Secondly, let us provide an intuitive explanation for this irregular behaviour at $t = 0$. By substituting $\ell(t)$ and $x(t)$, the total energy $\frac{1}{2}(\dot \ell^2 + \ell^2 \dot x^2) - \ell \cos(x)$ of the system can be computed as
            \[
                \frac{1}{2} \left( 4t^2 + \frac{9}{4} \abs{t}^5 \right) - t^2 \cos\left(\abs{t}^{3/2}\right) = \left(2 -  \cos\big(\abs{t}^{3/2}\big) \right)t^2 + \frac{9}{8} \abs{t}^5.
            \]
            The system is dissipative for $t<0$ until it loses all its energy at $t=0$, coming to a \eanf{dead stop}. For $t>0$, the system becomes accretive. Crucially, the condition $x(0)=\dot{x}(0)=0$ forces the energy and momentum to be zero at the singularity. Since the system loses all dynamical information, the future evolution is not fully determined by the initial state. Rather, the future trajectory depends, though not exclusively, on the energy injected into the system. \qedhere

        \end{enumerate}
    \end{Example}

    \subsection{Formalizing compatibility}\label{subsec:Int_comp}
    \noindent The \hyperref[prop]{Properties}~\ref{prop:i}-\ref{prop:v} listed in the \hyperref[sec:intro]{introduction} characterize the structure of solutions on proper compact intervals. As already stated, such structures are called sheaves of vector spaces. For our work, we need to make this notion more precise: Let $\Intp$ denote the category (the set) of compact intervals $[a, b]$ with $a < b$, ordered by inclusion. Here, the abbreviation $\Intp$ stands for \emph{proper intervals}. The (pre)sheaves (of vector spaces) introduced in the \hyperref[sec:intro]{introduction} are called \emph{$\Intp$-(pre)sheaves (of vector spaces)} or alternatively \emph{(pre)sheaves (of vector spaces) on $\Intp$}. If the underlying category is clear from context, we may omit it, which justifies the use of \eanf{(pre)sheaves (of vector spaces)} in the \hyperref[sec:intro]{introduction}.

    $\Intp$-sheaves of vector spaces are insufficient for a rigorous study of controllability. As noted in the \hyperref[sec:intro]{introduction}, deciding whether a trajectory $f$ on $[a, b]$ and a trajectory $g$ on $[b, c]$ can be concatenated to form a solution on $[a, c]$ requires checking their \eanf{compatibility} at $b$. However, $[b, b]$ is not an object in the category $\Intp$. Consequently, the condition for compatibility cannot be formulated using only the algebraic structure of $\Intp$-sheaves of vector spaces. Instead, this compatibility check is \emph{external} by verifying continuity or differentiability of specific orders. In other words, the structure of $\Intp$-sheaves of vector spaces does not encapsulate all the necessary structure that we expect from our solutions.

    The resolution is to transform this external analytic condition into an internal algebraic one. To do this, the regularity of the connection parametrized by $L$ must directly be encoded into the sheaf structure. We do this by extending our domain to the category $\Int$ of \emph{all} compact intervals $[a, b]$ with $a \leq b$, including degenerate intervals (points).

    \begin{Notation}
        Many constructions in this paper are formulated both for the categories $\Intp$ and $\Int$. Since the definitions and arguments are essentially identical in these two cases, we introduce the symbol $\Intpp$ as a placeholder. Whenever $\Intpp$ appears in a statement, it should be read as either $\Intp$ or $\Int$, and the statement is understood to hold for both choices.
    \end{Notation}

    An \emph{$\Int$-(pre)sheaf (of vector spaces)} is defined analogously to a $\Intp$-(pre)sheaf (of vector spaces), where every \eanf{$<$} in \hyperref[prop]{Properties}~\ref{prop:i}-\ref{prop:v} is replaced by \eanf{$\leq$}.

    A \emph{subpresheaf} $\mathcal{F}$ of a presheaf $\G$ on $\Intpp$ is a presheaf on $\Intpp$ such that
    \[
        \mathcal{F}[a, b] \subseteq \G[a, b] \qquad \text{for all $[a, b] \in \Intpp$}
    \]
    with restriction maps inherited from $\G$. We write $\mathcal{F} \preccurlyeq \G$.

    A \emph{subsheaf} $\mathcal{F}$ of a sheaf $\G$ on $\Intpp$ is a subpresheaf of $\G$ that is a sheaf. We write $\mathcal{F} \subsheaf \G$.

    In this work, the presheaves $\mathcal{F}$ considered are almost always presheaves of functions, i.e., for each object $U$ of the underlying category, $\mathcal{F}(U)$ is a set that only consists of functions with domain $U$. In general, this need not be the case. For this reason, the elements of $\mathcal{F}(U)$ are generally called \emph{sections} (and not \eanf{functions}).

    The primary objective of this section is to extend the $\Intp$-sheaf $\Ca^{L, M, N}$ defined in the \hyperref[sec:intro]{introduction} to a sheaf on the category $\Int$. To maintain notational clarity, we rename the existing $\Intp$-sheaves as
    \[
        \boxed{\CIntp{L, M, N} \coloneqq \Ca^{L, M, N}} \qquad \text{and} \qquad \boxed{\CIntp{N} \coloneqq \Ca^N}
    \]
    while the new $\Int$-sheaves are denoted by
    \[
        \CInt{L, M, N} \qquad \text{and} \qquad \CInt{N},
    \]
    respectively.

    \begin{Remark}\label{rem:Intp_subsheaf}
        Using the subsheaf relation $\subsheaf$, we can compactly write
        \[
            \CIntp{L', M', N'} \subsheaf \CIntp{L, M, N} \quad \Longleftrightarrow \quad L' \geq L \ \land\ M' \geq M\ \land\ N' \geq N. \qedhere
        \]
    \end{Remark}

    For an $\Int$-presheaf $\mathcal{F}$, two sections $f \in \mathcal{F}[a, b]$ and $g \in \mathcal{F}[b, c]$ are compatible if and only if
    \[
        f\rest{[b, b]} = g\rest{[b, b]}.
    \]
    If $\mathcal{F}$ is a sheaf, these sections can then be glued together to form a section on $[a, c]$. The equality above is an internal algebraic condition. For this condition to correctly model the analytic requirement of $\Ca^L$-regularity, the restriction to the point $[b, b]$ must capture exactly the value and the first $L$ derivatives. Therefore, $\Ca^L$-compatibility can only be algebraically described for $\Int$-presheaves of at least $L$-times differentiable functions.

    Let $\mathcal{F}$ be a subsheaf of $\left(\CIntp{N}\right)^n$, where $L \leq N$. We extend $\mathcal{F}$ to $\Int$ by enforcing that $\Ca^L$-compatible sections are glueable.

    Let $f \in \mathcal{F}[a, b]$, $g \in \mathcal{F}[b, c]$, and $f$ and $g$ be $\Ca^L$-compatible in $b$, i.e.,
    \begin{equation}\label{eq:IV}
        f(b) = g(b), \quad \dot f(b) = \dot g(b), \quad \dots, \quad f^{(L)}(b) = g^{(L)}(b).
    \end{equation}
    In order to formulate the gluing at $t = b$ as a sheaf-theoretic gluing property, we must extend the definition of $\mathcal{F}$ to include degenerate intervals $[b, b]$ and define restriction maps to such intervals so that
    \[
        g\rest{[b, b]} = f\rest{[b, b]} \quad \Longleftrightarrow \quad \text{condition~\eqref{eq:IV} holds},
    \]
    i.e., such that it makes sense to call $f$ and $g$ compatible. To this end, we extend the sheaf~$\mathcal{F}$ to the category $\Int$ of (all) compact intervals, ordered by inclusion, by defining
    \[
        \ext_L\left( \mathcal{F} \right)[a, b] \coloneqq
        \begin{cases}
            \mathcal{F}[a, b], & a < b, \\
            \{b\} \times (\R^n)^{L + 1}, & a = b,
        \end{cases}
    \]
    with restriction maps inherited from $\mathcal{F}$ on nondegenerate intervals, and restriction to degenerate intervals defined via jet evaluation. Specifically, for $a < c$ and $b \in [a, c]$, we define
    \[\arraycolsep=0pt\begin{array}{rrcl}
        \cdot \rest{[b, b]} \colon {}&{} \ext_L \left( \mathcal{F}\right) [a, c] {}&{} \to {}&{} \ext_L \left( \mathcal{F} \right) [b, b] \\
        {}&{} f {}&{}\mapsto {}&{} f\rest{[b, b]} \coloneqq \left(b, f(b), \dot f(b), \dots, f^{(L)}(b)\right).
    \end{array}\]
    For $a = b = c$, $\cdot\rest{[b, b]}$ is just the identity. The idea behind including $b$ as the first entry in $(b, f(b), \dot f(b), \dots, f^{(L)}(b))$ is that it naturally represents a function $[b, b] \to \R^{n(L + 1)}$. The extension $\ext_L$ is well-defined if and only if $L \in \{-1, 0, \dots, N\}$, where $L = \infty$ means
    \[
        \ext_\infty\left( \mathcal{F} \right)[b, b] = \{b\} \times \left(\R^n\right)^{\N_0}.
    \]
    Clearly, $\ext_L \left( \mathcal{F} \right)$ defines a presheaf of vector spaces on the category $\Int$. Two functions
    \[
        f \in \ext_L \left( \mathcal{F} \right)[a, c] \quad \text{and} \quad g \in \ext_L \left( \mathcal{F} \right)[b, d],
    \]
    with $a \leq b \leq c \leq d$, are \emph{compatible} if and only if $f\rest{[b, c]} = g\rest{[b, c]}$. In general, not every compatible pair (the bad case is $b = c$) can be uniquely glued. This is the case, for example, if $\mathcal{F} = \left(\CIntp{N}\right)^n$ and $L < N$. Hence, $\ext_L \left( \mathcal{F} \right)$ is generally not a sheaf on $\Int$.

    Fortunately, there is a well-established and powerful tool called \emph{sheafification}, denoted by $(\cdot)^\#$. Intuitively, sheafification acts as a closure operator under gluing, thereby converting any presheaf into a sheaf. In our setting, $(\ext_L(\mathcal{F}))^\#[a, c]$ consists of all sections on $[a, c]$ that can be constructed through a finite number of gluing operations applied to compatible sections of $\ext_L(\mathcal{F})$. For example,
    \[
        f \cup g \in (\ext_L(\mathcal{F}))^\#[a, c].
    \]
    Note that \eanf{$f \cup g$} is a slight abuse of notation, as $f$ and $g$ are strictly speaking equivalence classes. When $L \geq 0$, this causes no confusion because we identify equivalence classes with their unique $\Ca^L$ representatives. However, for $L = -1$, the sections $f$ and $g$ do not behave as functions; for instance, point evaluation is not defined in $\left(\CIntp{-1}\right)^n[a, b]$. In this case, $f \cup g$ denotes the equivalence class formed by the union of two representatives of $f$ and $g$ that agree at $b$.

    We define
    \[
        \CInt{L, M, N} \coloneqq  \ext_L \left( \CIntp{L, M, N} \right) = \left( \ext_L \left( \CIntp{L, M, N} \right) \right)^\#
    \]
    -- note that $\CIntp{L, M, N}$ admits unique gluing of $\Ca^L$-compatible solutions, hence $\ext_L \left( \CIntp{L, M, N} \right)$ is already an $\Int$-sheaf -- and
    \[
        \CInt{N} \coloneqq \CInt{N, N, N}.
    \]
    As a side note, we have, for all $L' = L, \dots, M$,
    \[
        \CInt{L, M, N} = \left( \ext_L \left( \CIntp{L', M, N} \right) \right)^\#.
    \]
    As usual, we write $\left( \CInt{L, M, N}\right)^n$ for the $\Int$-sheaf with
    \[
        \left( \CInt{L, M, N}\right)^n[a, b] \coloneqq \left( \CInt{L, M, N}[a, b]\right)^n
    \]
    and component-wise restriction maps.

    The $\Int$-presheaf $\B_R$ consisting of $\left( \CInt{L, M, N} \right)^n$-solutions to $R(\ddt) w = 0$,
    \[
        \B_R \coloneqq  \ext_L \left( \mathcal{S}_R^{L, M, N} \right),
    \]
    is also a sheaf.    In particular, $\B_R$ is a subsheaf of $\left( \CInt{L, M, N}\right)^n$,
    \[
        \B_R \subsheaf \left( \CInt{L, M, N}\right)^n.
    \]

    \begin{Remark}
        In contrast to Remark~\ref{rem:Intp_subsheaf}, the $\Int$-sheaves $\CInt{L', M', N'}$ and $\CInt{L, M, N}$ can only be compared when $L = L'$. We have
        \[
            \CInt{L, M', N'} \subsheaf \CInt{L, M, N} \quad \Longleftrightarrow \quad M' \geq M \ \land \ N' \geq N.
        \]
        Intuitively, this should not come as a surprise since $L$ determines the regularity required for compatibility. Hence, for $L' > L$, $\CInt{L', M', N'}$ and $\CInt{L, M, N}$ describe very different notions of compatibility for their sections, namely $\Ca^{L'}$- and $\Ca^L$-compatibility.
    \end{Remark}

    From a practical point of view, one is only interested in solutions on $\Intp$ but still wants to use the gluing property as described in $\Int$. This leads to the following definition.

    \begin{Definition}
        A subsheaf $\B \subsheaf \left(\CIntp{L}\right)^n$ is called \emph{$L$-regular} if and only if $\ext_L \left(\B\right)$ is a sheaf of vector spaces on $\Int$. Every such sheaf $\ext_L \left(\B\right)$ is called an \emph{$L$-regular local behaviour}. If $L$ is not specified, the sheaf is just called a \emph{local behaviour}.
    \end{Definition}

    \begin{Example}
        The subsheaf
        \[
            \mathcal{S}_R^{L, M, N} \subsheaf \left( \CIntp{L, M, N}\right)^n \subsheaf \left(\CIntp{L}\right)^n
        \]
        is $L$-regular since
        \[
            \B_R = \ext_L \left( \mathcal{S}_R^{L, M, N} \right)
        \]
        is a sheaf of vector spaces on $\Int$. In particular, $\B_R$ is an $L$-regular local behaviour.
    \end{Example}

    \begin{Remark}
        As we will see in a moment (Example~\ref{ex:counterexample_regular}), being $L$-regular is rather restrictive for subsheaves of $\left(\CIntp{L}\right)^n$.

        For subsheaves of $\left(\CInt{L}\right)^n$, however, the contrary is true: An $\Int$-sheaf $\B \subsheaf \left(\CInt{L}\right)^n$ is an $L$-regular local behaviour if and only if $\B = \ext_L \left( \B \rest{\Intp} \right)$, i.e., if and only if
        \[
            \forall t \in \R \colon \quad \B[t, t] =
            \begin{cases}
                \{t\} \times (\R^n)^{L+1}, & L < \infty, \\
                \{t\} \times (\R^n)^{\N_0}, & L = \infty.
            \end{cases}
        \]
        In other words, any subsheaf $\B \subsheaf \left(\CInt{L}\right)^n$ induces the (unique) $L$-regular local behaviour
        \[
            \ext_L \left( \B \rest{\Intp} \right).
        \]
        Since solutions at single points are not of primary interest in this work, we can always assume, without loss of generality, that our subsheaves of $\left(\CInt{L}\right)^n$ are local behaviours.
    \end{Remark}

    \begin{Example}\label{ex:counterexample_regular}
        The map $\ext_L$ is generally not a suitable choice for producing an $\Int$-sheaf from a $\Intp$-sheaf.

        Indeed, for finite $L$, not every subsheaf $\B \subsheaf \left( \CIntp{L} \right)^n$ is $L$-regular. A simple counterexample is $\B = \left( \CIntp{L+1}\right)^n$. But even subsheaves $\B \subsheaf \left( \CIntp{\infty} \right)^n$ may fail to be $\infty$-regular. Consider the function $f \in \Cinf(\R)$ defined by
        \[
            f(t) \coloneqq \begin{cases}
                \exp{-t^{-2}}, & t \neq 0, \\
                0, & t = 0,
            \end{cases}
        \]
        and define the subsheaf $\B$ by
        \[
            \B[a, b] \coloneqq \bigset{ c \cdot f\rest{[a, b]}}{ c \in \R }.
        \]
        Then $\ext_{\infty}(\B)$ is not an $\Int$-sheaf: the sections $0 \in \B[-1, 0]$ and $f\rest{[0, 1]} \in \B[0, 1]$ are compatible at $0$ -- their restrictions to $[0, 0]$ coincide -- but they cannot be glued.

        It is, however, possible to define $\B[t, t]$ for $t \in \R$ such that $\B$ becomes an $\Int$-sheaf, namely by setting $\B[t, t] \coloneqq \R$, with restriction map

        \[
            c f\rest{[a, b]} \mapsto \left( c f\rest{[a, b]} \right)\rest{[t, t]} \coloneqq c.
        \]
        Unlike in $\ext_\infty(\B)$, the coefficient $c$ can be recovered from the section. However, this is not possible via the values of the derivatives at $0$, since all derivatives vanish independently of~$c$.
    \end{Example}

    In Example~\ref{ex:counterexample_regular}, we saw that the map $\ext_L$ is in general not suitable for extending a subsheaf of $\left(\CIntp{L}\right)^n$ to an $\Int$-sheaf. The successful construction in that example instead relies on additional local data beyond higher-order derivatives. The next theorem shows that such extensions exist in general. The construction used in its proof can be understood as a generalization of the one employed in Example~\ref{ex:counterexample_regular}, and applies to arbitrary $\Intp$-sheaves of vector spaces.

    \begin{Theorem}\label{thm:Int&Intp}
        Let $\B$ be an $\Int$-sheaf of vector spaces. Then $\B\rest{\Intp}$ is a $\Intp$-sheaf. Conversely, every $\Intp$-sheaf can be extended to an $\Int$-sheaf.
    \end{Theorem}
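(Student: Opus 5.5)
The first assertion is essentially bookkeeping. Given an $\Int$-sheaf of vector spaces $\B$, I will check \hyperref[prop]{Properties}~\ref{prop:i}-\ref{prop:v} for $\B\rest{\Intp}$ directly. Every instance of these properties on $\Intp$, with strict inequalities, is a special case of the corresponding instance on $\Int$, with non-strict inequalities. For gluing, the glued section $h\in\B[a,d]$ with $a\le b<c\le d$ is produced by the $\Int$-gluing property, and it is unique there. Since $[a,d]$ is a proper interval, $h$ is also the unique glued section in $\B\rest{\Intp}$. No degenerate intervals enter, so nothing else needs checking.

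For the converse, I would let $\mathcal F$ be an $\Intp$-sheaf of vector spaces and generalize Example~\ref{ex:counterexample_regular} by defining $\mathcal F[t,t]$ as a space of germs, identified so that equality means "locally glueable". Let $\mathcal L_t$ be the space of left germs at $t$, namely sections on some $[t-\varepsilon,t]$ modulo equality on a smaller such interval, with the colimit under restriction. Define right germs $\mathcal R_t$ and two-sided germs $\mathcal T_t$ (sections on $[t-\varepsilon,t+\varepsilon]$) analogously. These are vector spaces, and there are linear maps $\lambda\colon\mathcal T_t\to\mathcal L_t$ and $\rho\colon\mathcal T_t\to\mathcal R_t$. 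I would set
\[
\mathcal F[t,t]\coloneqq (\mathcal L_t\oplus\mathcal R_t)/\bigset{(\ell,-r)}{\exists\, h\in\mathcal T_t\colon \lambda h=\ell,\ \rho h=r},
\]
which is the pushout of $\lambda$ and $\rho$. A section $f\in\mathcal F[a,c]$ is then sent to the class of its left germ at $t$ if $t=c$, of its right germ if $t=a$, and of $\lambda$ or $\rho$ of its two-sided germ if $a<t<c$; the latter two agree in the pushout. By linearity the image of a left germ $\ell$ equals the image of a right germ $r$ exactly when some two-sided germ $h$ has $\lambda h=\ell$ and $\rho h=r$. To see this, note that the relating subspace is itself the image of $h\mapsto(\lambda h,-\rho h)$. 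This is exactly where the vector-space structure is used: it replaces the transitivity of a zig-zag of gluings by a single germ, for instance $h_1-h_2+h_3$.

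Next I will verify the presheaf axioms for the extended assignment. Composition of restrictions onto $[t,t]$ through an intermediate interval holds because of how the pushout was chosen: passing to a sub-interval with endpoint $t$ turns a two-sided germ into $\lambda h$ or $\rho h$, which has the same class. Linearity of all restriction maps is clear.

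The main obstacle is the gluing axiom when the overlap is degenerate. Suppose $f\in\mathcal F[a,b]$ and $g\in\mathcal F[b,c]$ satisfy $f\rest{[b,b]}=g\rest{[b,b]}$. By the observation above, there are $\varepsilon>0$ and $h\in\mathcal F[b-\varepsilon,b+\varepsilon]$ with $h=f$ on $[b-\varepsilon,b]$ and $h=g$ on $[b,b+\varepsilon]$, after shrinking $\varepsilon$ so that the germ equalities hold on actual intervals. I then glue $f$ with $h$ over the proper overlap $[b-\varepsilon,b]$ and the result with $g$ over $[b,b+\varepsilon]$, using \ref{prop:iii} in $\mathcal F$. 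Uniqueness follows from uniqueness on the proper pieces $[a,b]$ and $[b,c]$: two sections of $\mathcal F[a,c]$ that agree on both pieces differ by a section $d$ with vanishing restrictions there, and $d$ and $0$ are both gluings of $0$ and $0$ over some $[b-\delta,b+\delta]$. The remaining cases are routine. Gluing involving a degenerate interval $[b,b]$ as one of the two sections is trivial, because the section on $[b,b]$ is determined by the other one. Overlaps with $b<c$ are the original $\Intp$ gluing. Finally, $\mathcal F[t,t]$ has been added without changing $\mathcal F$ on $\Intp$, so this $\Int$-sheaf extends $\mathcal F$.
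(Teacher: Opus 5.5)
Your construction is the same as the paper's: left, right and two-sided germs, with $\mathcal F[t,t]$ defined as the pushout of $\lambda$ and $\rho$. The first assertion, the presheaf axioms and the existence half of gluing over a degenerate overlap are all fine. The gap is the uniqueness step. The family $\{[a,b],[b,c]\}$ is not a covering in $\Intp$, because its overlap is degenerate. So \emph{$d$ and $0$ are both gluings of $0$ and $0$} is not an instance of \ref{prop:iii}. To apply \ref{prop:iii} near $b$ you would first need $d\rest{[b-\delta,b+\delta]}=0$, which is exactly what is to be proved. What you actually need is injectivity of $(\lambda,\rho)\colon\mathcal T_b\to\mathcal L_b\oplus\mathcal R_b$, and this fails for general $\Intp$-sheaves. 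Fix $p\in\R$ and put $\mathcal F[x,y]\coloneqq\R$ if $x<p<y$ and $\mathcal F[x,y]\coloneqq\{0\}$ otherwise. Take identity maps between copies of $\R$ and zero maps otherwise. Properties~\ref{prop:i}--\ref{prop:v} are checked directly, using $(a,c)\cup(b,d)=(a,d)$ for $a\le b<c\le d$. Yet $1\in\mathcal F[p-1,p+1]$ restricts to $0$ on both $[p-1,p]$ and $[p,p+1]$, so $\mathcal L_p=\mathcal R_p=0$ while $\mathcal T_p=\R$.

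No cleverer choice of $\mathcal F[p,p]$ can repair this. In any $\Int$-extension the restriction maps are linear, so $0\in\mathcal F[p-1,p]$ and $0\in\mathcal F[p,p+1]$ are compatible, and both $0$ and $1$ in $\mathcal F[p-1,p+1]$ glue them. Hence the converse is false as stated. The paper's proof has the same hole: it simply asserts that there is no data loss from $\B_b$ to $\B_{b-}\oplus\B_{b+}$. The missing hypothesis is the following: every section on $[x,y]$ whose restrictions to $[x,t]$ and $[t,y]$ vanish for some $t\in(x,y)$ is itself zero. The argument above shows this hypothesis is necessary, and it holds for the sheaves of classes of functions modulo finite sets used in the paper. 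Once it is assumed, your uniqueness step is immediate and the rest of your proof goes through unchanged.
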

    \begin{proof}
        The proof uses categorical limit constructions and is therefore deferred to the \hyperref[app]{appendix}.
    \end{proof}

    In terms of solutions to differential equations, however, such general extensions of $\Intp$-sheaves are typically of limited interest. While for real-world physical systems it is debatable whether solutions that coincide to all orders at a point should be considered glueable, for time-varying linear DAEs, this is always the case. Hence, we believe it is sufficient to restrict our attention to $L$-regular $\Intp$-sheaves.

\section{Controllability}\label{sec:contr}
    \noindent This section applies the sheaf-theoretic framework to systems theory, with a focus on controllability. The most general controllability results are stated in Subsection~\ref{subsec:contr_main}.

    In Subsection~\ref{subsec:sheaf_morphisms}, we explain $R(\ddt)$ as a sheaf morphism. This prepares us for Subsection~\ref{subsec:tnnf}, where we formulate an algebraic criterion for controllability (Theorem~\ref{thm:main}), translating the results of \citet[Theorem 3.2]{part1} and \citet[Theorems~6,~7]{zerz} into the sheaf-theoretic setting.

    To enable such a characterization, the analysis is restricted to systems with real meromorphic coefficients, which permits the use of the Teichmüller-Nakayama form, a factorization for differential polynomial matrices. Furthermore, the class of admissible solutions is restricted to ensure a continuation property.

    \begin{Definition}(Continuation property)\label{def:continuation}
        Let $\mathcal{S}$ be an $\Int$-presheaf of vector spaces. Then, \emph{$\mathcal{S}$ satisfies the continuation property} if and only if
        \begin{equation*}
            \forall [a, b] \in \Intp \; \forall g \in \mathcal{S}[a, b] \; \exists \varepsilon > 0 \; \exists f \in \mathcal{S}[a - \varepsilon, a]  \; \exists h \in \mathcal{S}[b, b + \varepsilon] \colon \quad f, g, h \text{ compatible} \qedhere
        \end{equation*}
    \end{Definition}

    Intuitively, the continuation property states that any solution defined on $[a, b]$ can be extended to the left and right; in other words, the maximal existence interval of a solution is open. In Theorem~\ref{thm:contr}~\ref{thm:contr:(i)} and Lemma~\ref{lemma:cont_degree}, we show that this \hyperref[def:continuation]{continuation property} holds in some interesting cases, although it is generally difficult to check.

    \subsection{Controllability of sheaves}\label{subsec:contr_main}

    \begin{Definition}[Controllability]\label{def:contr_new}
        Let $\mathcal{S}$ be an $\Int$-presheaf of vector spaces. We define the different notions of controllability given in Table~\ref{tab:contr_new}.

    \begin{table}[!htbp]
        \centering
        \setcellgapes{4pt}
        \makegapedcells

        \begin{tabular}{|c|c|c|}
            \hline            Definiendum & Definiens\\
            \hline \hline            \makecell[c]{$f \in \mathcal{S}[a, b]$ is controllable \\
            to $h \in \mathcal{S}[c, d]$ via $g \in \mathcal{S}[b, c]$}
            & $f$, $g$, and $h$ are compatible\\
            \hline            \makecell[c]{$f \in \mathcal{S}[a, b]$ is controllable            to $h \in \mathcal{S}[c, d]$}
            & \makecell[c]{$\exists g \in \mathcal{S}[b, c]\colon f$ is controllable to $h$ via $g$}\\
            \hline            \makecell[c]{$\mathcal{S}$ is controllable to $0$}
            & \makecell[c]{$\forall a < b < c < d \; \forall f \in \mathcal{S}[a, b]\colon$ \\
            $f$ is controllable to $0 \in \mathcal{S}[c, d]$}\\
            \hline            \makecell[c]{$\mathcal{S}$ is controllable}
            & \makecell[c]{$\forall a < b < c < d \; \forall f \in \mathcal{S}[a, b]$ \\
            $\forall h \in \mathcal{S}[c, d]\colon f$ is controllable to $h$}\\
            \hline        \end{tabular}
        \vspace{1em}
        \caption{Notions of controllability for local behaviours} \vspace{-1em-14pt} \qedhere
        \label{tab:contr_new}
    \end{table}

    \end{Definition}

    Conceptually, this notion of controllability appears to align with the standard concept of behavioural controllability, see property~\eqref{eq:contr_global}. In Theorem~\ref{thm:main}, we provide an algebraic criterion for controllability that is analogous to \cite[Theorem 3.2]{part1} and \cite[Theorems~6,~7]{zerz}, thereby establishing that our notion of controllability is indeed consistent with the classical one.

    It is straightforward to see that the \hyperref[def:continuation]{continuation property} and controllability only have to be checked for respective presheaves. We formulate this in the following proposition.

    \begin{Proposition}\label{prop:presheaves_ftw}
        Let $\B \subsheaf \left( \CIntp{N} \right)^n$ and $\B^{L} \coloneqq \ext_{L} \left(\B \right)$ for $-1 \leq L \leq N$. If $\B^N$ satisfies any of the following properties: the \hyperref[def:continuation]{continuation property}, controllability, or controllability to $0$, then $\B^L$ and $(\B^L)^\#$ satisfy the same property for all $-1 \leq L \leq N$.
    \end{Proposition}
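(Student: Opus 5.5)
The plan is to reduce everything to the observation that compatibility in $\B^N$ implies compatibility in $\B^L$ for every $L \le N$. Then I pass from $\B^L$ to its sheafification.

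\textbf{Step 1 (compatibility transfers downward).} On nondegenerate intervals we have $\B^L[a,b] = \B[a,b] = \B^N[a,b]$, with identical restriction maps. At a degenerate interval, $f\rest{[b,b]}$ in $\B^L$ is the truncation of the $N$-jet $(b, f(b), \dots, f^{(N)}(b))$ to its first $L+1$ derivative entries. For $L=-1$ it is only the point $b$, so compatibility is automatic; for $N=\infty$ the truncation is from the infinite jet. Hence equality of restrictions in $\B^N$, on nondegenerate or degenerate overlaps, implies equality in $\B^L$. Consequently any triple $f,g,h$ compatible in $\B^N$ is compatible in $\B^L$.

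\textbf{Step 2 (the properties for $\B^L$).} The continuation property, controllability to $0$, and controllability are each of the form ``for all sections on proper intervals there exist sections on proper intervals such that the triple is compatible''. The quantified sections live on nondegenerate intervals, where $\B^L$ and $\B^N$ agree, so Step 1 transfers each property verbatim. The zero section is the same element in both.

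\textbf{Step 3 (passing to $(\B^L)^\#$).} I use the description of sheafification given earlier: sections of $(\B^L)^\#[a,b]$ on proper intervals are finite gluings $g = g_1\cup\dots\cup g_k$ of compatible sections $g_i \in \B^L[t_{i-1},t_i]$, where $a=t_0<\dots<t_k=b$. Compatibility in $(\B^L)^\#$ is checked on the pieces at the endpoints. Degenerate sections are unchanged, since $\B^L[t,t]$ is already everything.

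\emph{Continuation.} Apply the continuation property of $\B^L$ to $g_1$ to obtain a left extension $f$. The compatibility of $f$ with $g$ only involves $g_1$ near $a$, so $f$ is compatible with $g$. Treat the right end with $g_k$ in the same way.

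\emph{Controllability.} Given $f\in(\B^L)^\#[a,b]$ and $h\in(\B^L)^\#[c,d]$, take the last piece $f_k$ of $f$ on $[s,b]$ and the first piece $h_1$ of $h$ on $[c,u]$. Controllability of $\B^L$ gives a connecting section $g$ on $[b,c]$ compatible with both. Gluing yields a section of $(\B^L)^\#$ on $[a,d]$, because the sheafification is closed under gluing compatible sections. Controllability to $0$ is the special case $h=0$.

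\textbf{Main obstacle.} The main care point is justifying that compatibility in $(\B^L)^\#$ at a point is exactly determined by the adjacent pieces. This holds because the restriction of a glued section to $[b,b]$ is the restriction of whichever piece contains $b$. If $b$ is a gluing point, the adjacent pieces themselves agree there by the definition of compatibility of the gluing.
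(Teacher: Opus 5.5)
Your proposal is correct and follows essentially the same route as the paper. Jet truncation shows that compatibility in $\B^N$ implies compatibility in $\B^L$, and for $(\B^L)^\#$ one works with the initial and end pieces of a section, which are sections of $\B^L$ (equivalently of $\B^N$) on proper intervals. The only cosmetic difference is that you apply the already-established properties of $\B^L$ to these boundary pieces, whereas the paper applies those of $\B^N$ and then notes that $\Ca^N$-compatibility suffices.
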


    \begin{proof}
         Fix $-1 \leq L \leq N$. For $a < b < c < d$, if $f \in \B^L[a, b]$, $g \in \B^L[b, c]$, and $h \in \B^L[c, d]$ are compatible as sections in $\B^N$, i.e.,
        \[
            \left(b, f(b), \dots f^{(N)}(b) \right) = \left(b, g(b), \dots g^{(N)}(b)\right),\ \left(c, g(c), \dots g^{(N)}(c) \right) = \left(c, h(c), \dots h^{(N)}(c)\right),
        \]
        compatibility follows in $\B^L$:
        \[
            \left(b, f(b), \dots f^{(L)}(b) \right) = \left(b, g(b), \dots g^{(L)}(b)\right),\ \left(c, g(c), \dots g^{(L)}(c) \right) = \left(c, h(c), \dots h^{(L)}(c)\right).
        \]
        Hence, all results follow for $\B^L$.

        Now, consider $(\B^L)^\#$. Every $f \in (\B^L)^\#[a, b]$ has an initial piece
        \[
            f_0 \coloneqq f\rest{[a, a + \varepsilon]}\in \B^L[a, a+ \varepsilon] = \B^N[a, a+ \varepsilon]
        \]
        and an end piece
        \[
            f_1 \coloneqq f\rest{[b - \varepsilon, b]}\in \B^L[b - \varepsilon, b] = \B^N[b - \varepsilon, b],
        \]
        where $\varepsilon > 0$ is sufficiently small.

        For the \hyperref[def:continuation]{continuation property}, apply it to $f_0$ and $f_1$ to obtain suitable continuations of~$f$.

        For controllability (resp.\ controllability to $0$), control the end piece of $f\in(\B^L)^\#[a, b]$ to the initial piece of $h\in(\B^L)^\#[c, d]$ (resp.\ $h = 0\in(\B^L)^\#[c, d]$) via $g \in \B^N[b, c]$. The compatibility is of regularity $\Ca^N$ in $b$ and $c$, hence $f$, $g$, and $h$ are (sufficiently) compatible.
    \end{proof}

    \begin{Remark}
        While we have defined $\ext_L$ and the sheafification of $\ext_L(\B)$ only for subsheaves $\B \subsheaf \left( \CIntp{N} \right)^n$ with $N \geq L$, these constructions naturally generalize to arbitrary subpresheaves $\B$. Indeed, Proposition~\ref{prop:presheaves_ftw} remains valid for any subpresheaf $\B \preccurlyeq \left( \CIntp{N} \right)^n$.
    \end{Remark}

    Local behaviours that satisfy the \hyperref[def:continuation]{continuation property} admit the usual relation between controllability and controllability to $0$, as shown in the following theorem.

    \begin{Theorem}\label{thm:contr}
        Let $\B$ be a local behaviour. Then the following statements hold:
        \begin{enumerate}[label=(\roman*)]
            \item \label{thm:contr:(i)} If $\B$ is controllable, then it satisfies the \hyperref[def:continuation]{continuation property}.
            \item \label{thm:contr:(ii)} If $\B$ satisfies the \hyperref[def:continuation]{continuation property}, then $\B$ is controllable if and only if it is controllable to $0$.
            \item \label{thm:contr:(iii)} $\B$ is controllable if and only if it is controllable to $0$ and satisfies the \hyperref[def:continuation]{continuation property}.
        \end{enumerate}
    \end{Theorem}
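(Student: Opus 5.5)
The plan is to prove (i) and (ii) directly from the definitions, using only that $\B$ is an $\Int$-sheaf of vector spaces whose point stalks are full: $\B[t,t] = \{t\}\times(\R^n)^{L+1}$ (or $(\R^n)^{\N_0}$). Part (iii) then follows by combining them: the forward direction uses (i) for the continuation property and specializes $h=0$ for controllability to $0$; the reverse direction is (ii).

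For (i), let $g \in \B[a,b]$. Choose auxiliary points, say $a-2 < a-1 < a$ and $b < b+1 < b+2$. Apply controllability to the data $0 \in \B[a-2,a-1]$ and $g \in \B[a,b]$. This gives $f' \in \B[a-1,a]$ with $0$, $f'$, $g$ compatible. Setting $\varepsilon = 1$ and $f \coloneqq f'$ gives a left continuation.

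Symmetrically, apply controllability to $g \in \B[a,b]$ and $0 \in \B[b+1,b+2]$. This gives $h \in \B[b,b+1]$ compatible with $g$ at $b$.

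Compatibility of the triple $f,g,h$ means $f\rest{[a,a]} = g\rest{[a,a]}$ and $g\rest{[b,b]} = h\rest{[b,b]}$. Both equalities come from the two separate applications. Hence the continuation property holds. No gluing is even needed here.

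For (ii), the direction "controllable $\Rightarrow$ controllable to $0$" is trivial. For the converse, let $a<b<c<d$, $f \in \B[a,b]$ and $h \in \B[c,d]$. The idea is to split $[b,c]$ at its midpoint $m$ and use linearity.

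First, control $f$ to $0 \in \B[m,d]$. This gives $g_1 \in \B[b,m]$ with $f, g_1, 0$ compatible.

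To steer from $0$ to $h$, use time reversal. The definition only provides forward controllability to $0$, so instead consider $-h$.

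1. Use the continuation property to extend $h$ to the left: obtain $e > 0$ and $h' \in \B[c-e,c]$ compatible with $h$. By the sheaf property, glue them to $\tilde h \in \B[c-e,d]$. Shrinking $e$ via restriction, we may assume $c-e > m$.
2. Controllability to $0$ only moves forward in time, so we need a reverse version. Use the continuation property again to extend $\tilde h$ to the right, to $\hat h \in \B[c-e, d+\delta]$.
3. Apply controllability to $0$ to $\hat h\rest{[c-e,c']}$ for a suitable $c'$.

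The main obstacle is precisely this: controllability to $0$ lets us drive a trajectory to zero in the future, not from zero in the past. The trick I would try first is a linear combination of two pieces.

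- Take $u \in \B[a', d]$, a backward continuation of $h$ down to some point $a' < m$. Obtaining it requires extending $h$ leftwards across all of $[m,c]$. The continuation property only gives a small $\varepsilon$, so this is the delicate step.
- Then steer $f - u\rest{[a,b]}$ to $0$ via controllability to $0$, obtaining $g_2$ on $[b,m]$.
- Set $g \coloneqq g_2 + u\rest{[b,c]}$ on $[b,m]$, and $g \coloneqq u\rest{[m,c]}$ on $[m,c]$.

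Linearity of restriction and compatibility then gives $f, g, h$ compatible.

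To get a $u$ defined far enough left, I would avoid the long extension altogether by reversing roles. Extend $h$ leftwards only to $[c-\varepsilon, d]$ using the continuation property, and place the zero-controlling step inside $[c-\varepsilon, c]$ instead of at $m$. This uses the freedom that $\B$ is controllable to $0$ for every choice of $a<b<c<d$.

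Concretely:

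1. First control $f$ to $0$ on $[b, c-\varepsilon]$.
2. Then on $[c-\varepsilon, c]$ we need a section from $0$ to $h$. Take $v \coloneqq h' \in \B[c-\varepsilon,c]$, the left continuation of $h$.
3. Apply controllability to $0$ to the section $v\rest{[c-\varepsilon,c-\varepsilon/2]}$. We would need to control it to $0$ *before* it, which again fails in the forward-only formulation.

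So I expect the real argument to require the continuation property iterated on the left-extension side: a whole left extension of $h$ to $[b,d]$, then subtracting it from $f$'s data and driving the difference to $0$ in forward time. I would first attempt to show that iterated continuation together with controllability to $0$ yields arbitrary left extensions. Everything else reduces to gluing compatible pieces using the sheaf axiom and linearity.
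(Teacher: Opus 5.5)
Your proof of (i) is correct and is essentially the paper's: steer $0$ to $g$ and $g$ to $0$, then read off the two point compatibilities. Part (iii) does follow from (i) and (ii) as you say. The gap is the sufficiency direction of (ii), which you do not actually prove. Each of your attempts either needs a left extension of $h$ reaching back past $b$, or stalls at steering from $0$ to $h$. The plan you end with, obtaining that long extension by iterating the continuation property, cannot work as stated. The $\varepsilon$'s supplied by successive continuations may shrink, so the iteration can stall at a finite point. For example, the smooth solutions of $t\dot w + w = 0$ satisfy the continuation property, yet $1/t$ on $[1,2]$ extends to $[\delta,2]$ for every $\delta>0$ but to no interval containing $0$. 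So controllability to $0$ has to enter essentially, and once you see how it enters, no long extension is needed.

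The missing idea is to use linearity on a short overlap: steer a \emph{difference} of two trajectories to $0$ and then add one of them back. Let $e_1 = f\cup g_1\cup 0\in\B[a,d]$ come from steering $f$ to $0$. Use the continuation property once to get $\hat h\in\B[c-2\varepsilon,d]$ extending $h$, with $b<c-2\varepsilon$. Apply controllability to $0$ to $(e_1-\hat h)\rest{[c-2\varepsilon,c-\varepsilon]}$ with target $0\in\B[c,d]$. This gives $e_2\in\B[c-2\varepsilon,d]$ equal to $e_1-\hat h$ on $[c-2\varepsilon,c-\varepsilon]$ and to $0$ on $[c,d]$. Then $\hat h+e_2$ agrees with $e_1$ on $[c-2\varepsilon,c-\varepsilon]$ and with $h$ on $[c,d]$. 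It therefore glues with $e_1\rest{[a,c-\varepsilon]}$ along this proper overlap, and restricting the glued section to $[b,c]$ yields the required $g$; this is the paper's proof.

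In the setting of your last attempt, $f$ has already been steered to a section vanishing on $[c-\varepsilon,d]$. There the fix is to apply controllability to $0$ to an initial piece of $-\tilde h$, with $\tilde h = v\cup h\in\B[c-\varepsilon,d]$, instead of to $v$, and then add $\tilde h$ back. The result is $0$ near $c-\varepsilon$ and $h$ on $[c,d]$, which is exactly the ``steer from $0$ to $h$'' step you were missing.
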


    \begin{proof}
        \ref{thm:contr:(i)}: Let $f \in \B[a, b]$, where $a < b$. Choose any $\varepsilon > 0$. Since $\B$ is controllable and a presheaf of vector spaces, $0 \in \B[a - 2\varepsilon, a - \varepsilon]$ is controllable to $f$ via some $g_{-}$ and $f$ is controllable to $0 \in \B[b + \varepsilon, b + 2\varepsilon]$ via some $g_{+}$. Now, $g_{-}$, $f$, and $g_{+}$ are compatible, which proves the \hyperref[def:continuation]{continuation property}.

        \ref{thm:contr:(ii)}: Assume that $\B$ satisfies the \hyperref[def:continuation]{continuation property}. Controllability to $0$ is necessary for controllability. It remains to prove its sufficiency.

        Consider $a < b < c < d$ and functions $f \in \B[a, b]$ and $h \in \B[c, d]$. By assumption, $f$ is controllable to $0 \in \B[c, d]$. That means, there exists $g_1 \in \B[b, c]$ that is compatible with both $f$ and $0$, see Figure~\ref{fig:proof}. The glued function~$e_1$ is defined by $e_1 \coloneqq f \cup g_1 \cup 0 \in \B[a, d]$.

        By the \hyperref[def:continuation]{continuation property}, $h$ can be extended to the left, yielding $\hat{h} \in \B[c - 2\varepsilon, d]$ for some $\varepsilon > 0$. Select $\varepsilon$ small enough to ensure $b < c - 2\varepsilon$. Now, consider $e_1\rest{[c - 2\varepsilon, d]} - \hat{h} \in \B[c - 2\varepsilon, d]$. The initial segment $f_2$ (depicted in blue in Figure~\ref{fig:proof}) of this difference is defined as
        \[
        f_2 \coloneqq e_1\rest{[c - 2\varepsilon, c - \varepsilon]} - \hat{h}\rest{[c - 2\varepsilon, c - \varepsilon]} \in \B[c - 2\varepsilon, c - \varepsilon].
        \]
        By assumption, $f_2$ is controllable to $0 \in \B[c, d]$, yielding a compatible function $g_2 \in \B[c - \varepsilon, c]$ and a glued function $e_2 \coloneqq f_2 \cup g_2 \cup 0 \in \B[c - 2\varepsilon, d]$. Then,
        \[
        h_3 \coloneqq \hat{h} + e_2 \in \B[c - 2\varepsilon, d]
        \]
        coincides with $g_1$ on $[c - 2\varepsilon, c - \varepsilon]$ and with $h$ on $[c, d]$. Thus, $f$, $g_1\rest{[b, c - 2\varepsilon]}$, and $h_3$ are compatible and can be glued together to form $e_3 \coloneqq f \cup g_1 \rest{[b, c - 2\varepsilon]} \cup h_3 \in \B[a, d]$. Setting $g \coloneqq e_3\rest{[b, c]}$, we conclude that $f$, $g$, and $h$ are compatible because $e_3 = f \cup g \cup h$.

        \begin{figure}[H]
            \centering
            \scalebox{1.1}{\includegraphics{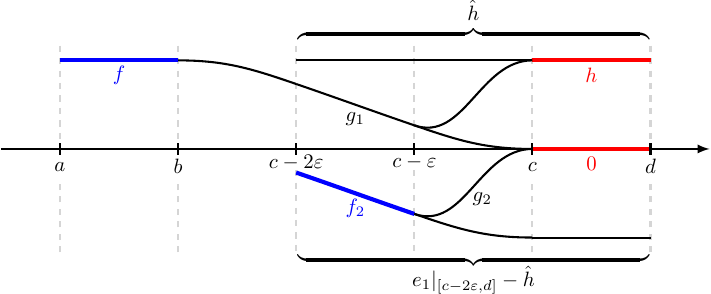}}
            \caption{Sketch for the proof of Theorem~\ref{thm:contr}~\ref{thm:contr:(ii)}.}
            \label{fig:proof}
        \end{figure}

        \ref{thm:contr:(iii)}: This follows directly from \ref{thm:contr:(i)} and \ref{thm:contr:(ii)}.
    \end{proof}

    We conclude with several remarks concerning the preceding theorem.

    \begin{Remark}
        \begin{enumerate}[label=(\roman*)]
            \item The statement of Theorem~\ref{thm:contr}~\ref{thm:contr:(i)} holds more generally for $\Int$-presheaves of vector spaces.

            \item In the proof of Theorem~\ref{thm:contr}~\ref{thm:contr:(i)}, the choice of $\varepsilon > 0$ was arbitrary. Hence, controllable local behaviours are \emph{flabby} sheaves, meaning that all restriction maps are surjective. Any flabby sheaf on $\Int$ clearly satisfies the \hyperref[def:continuation]{continuation property}. In this sense, sheaves satisfying the \hyperref[def:continuation]{continuation property} could be termed \emph{locally flabby}, although such a notion does not appear to be established in the literature.

            \item The converse of Theorem~\ref{thm:contr}~\ref{thm:contr:(i)} is clearly false, as the $\Int$-sheaf $\B$ from Example~\ref{ex:counterexample_regular} satisfies the \hyperref[def:continuation]{continuation property} but is not controllable.

            \item In Theorem~\ref{thm:contr}~\ref{thm:contr:(ii)} and~\ref{thm:contr:(iii)}, the \hyperref[def:continuation]{continuation property} cannot be omitted, as the following example shows.

            Let $\B$ be a nonzero controllable local behaviour, e.g., $\B = \CInt{\infty}$, and let $s \in \B[a, b]$ be a nonzero section. Thus, there exists a point $t \in (a, b)$ such that $s(t) \neq 0$. We define a new local behaviour $\B'$ that is derived from the sheaf $\B$ by setting it to $0$ left of $t$. Formally, we define
            \[
                \B'[c, d] \coloneqq \begin{cases}
                    \B[c, d], & t \leq c, \\
                    \bigset{f \in \B[c, d]}{f\rest{[c, t]} = 0}, & t \in (c, d], \\
                    \{0\}, & d < t,
                \end{cases} \qquad \text{for } [c, d] \in \Int.
            \]
            This is an $\Int$-sheaf as gluing at $t$ remains possible and the restriction morphisms are well-defined. Moreover, $\B'$ is controllable to $0$ since any $f \in \B'[c, d]$ is either already $0$ or satisfies $f\rest{[d - \varepsilon, d]} \in \B[d - \varepsilon, d]$, where $t \leq d - \varepsilon$. In the latter case, we rely on the controllability of $\B$ to demonstrate that $f$ is controllable to $0$. However, $\B'$ is not controllable, as $0 \in \B'[a-1, a]$ cannot be controlled to $s\rest{[t, b]} \in \B'[t, b]$. \qedhere
        \end{enumerate}
    \end{Remark}

    \subsection{Differential operators as sheaf morphisms}\label{subsec:sheaf_morphisms}

    \noindent In the previous subsection, we derived controllability results for general local behaviours. To obtain specific results for time-varying linear DAEs $R(\ddt) w = 0$, we interpret the differential operator $R(\ddt)$ as a morphism of sheaves. Identifying the solution space as the kernel of this morphism prepares for the algebraic analysis in Subsection~\ref{subsec:tnnf}.

    \begin{Definition}
        A \emph{morphism of $\Intpp$-presheaves (of vector spaces)}
        \[
            \varphi = (\varphi_{[a, b]})_{[a, b] \in \Intpp} \colon \mathcal{F} \to \mathcal{G},
        \]
        where $\mathcal{F}$ and $\mathcal{G}$ are presheaves (of vector spaces) on $\Intpp$, is a family of (linear) maps
        \[
            \varphi_{[a, b]} \colon \mathcal{F}[a, b] \to \mathcal{G}[a, b], \qquad [a, b] \in \Intpp,
        \]
        such that for all $[b, c] \subseteq [a, d]$ with $[b, c], [a, d] \in \Intpp$,
        \[\begin{tikzcd}
            \mathcal{F}[a, d] \arrow[r, "\varphi_{[a, d]}"] \arrow[d, "\cdot \rest{[b, c]}"'] & \mathcal{G}[a, d] \arrow[d, "\cdot \rest{[b, c]}"]\\
            \mathcal{F}[b, c] \arrow[r, "\varphi_{[b, c]}"'] & \mathcal{G}[b, c]
        \end{tikzcd}\]
        commutes.

        A \emph{morphism of $\Intpp$-sheaves (of vector spaces)} $\varphi \colon \mathcal{F} \to \mathcal{G}$ is a morphism of presheaves (of vector spaces), where $\mathcal{F}$ and $\mathcal{G}$ are sheaves on $\Intpp$.
    \end{Definition}

    \begin{Definition}
        Let $\mathfrak{R} \colon \mathcal{F} \to \mathcal{G}$ be a morphism of presheaves of vector spaces on $\Intpp$. Then, the collection of all sections being mapped to zero via $\mathfrak{R}$ constitutes a subpresheaf of $\mathcal{F}$ denoted by $\ker \mathfrak{R}$ and called the \emph{kernel of $\mathfrak{R}$}.
    \end{Definition}

    Recall the following well-known proposition.
    \begin{Proposition}
        Let $\mathfrak{R} \colon \mathcal{F} \to \mathcal{G}$ be a morphism of sheaves of vector spaces on $\Intpp$. Then, $\ker \mathfrak{R}$ is a sheaf. In particular, $\ker \mathfrak{R} \subsheaf \mathcal{F}$.
    \end{Proposition}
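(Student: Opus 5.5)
The plan is to verify the presheaf axioms for $\ker \mathfrak{R}$ and then the gluing axiom, with the gluing axiom as the only real step.

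First, the presheaf structure. For $[b,c] \subseteq [a,d]$ in $\Intpp$ and $w \in \ker \mathfrak{R}[a,d]$, i.e.\ $\mathfrak{R}_{[a,d]} w = 0$, the commuting square in the definition of a morphism gives
\[
    \mathfrak{R}_{[b,c]}\bigl(w\rest{[b,c]}\bigr) = \bigl(\mathfrak{R}_{[a,d]} w\bigr)\rest{[b,c]} = 0\rest{[b,c]} = 0 .
\]
The last equality uses that restriction maps of $\G$ are linear (\ref{prop:v}). Hence the restriction maps of $\mathcal{F}$ map kernels into kernels, so \ref{prop:i} holds. Property \ref{prop:ii} is inherited directly from $\mathcal{F}$.

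Each $\ker \mathfrak{R}_{[a,b]}$ is the kernel of a linear map, hence a vector space, which gives \ref{prop:iv}. Restrictions of linear maps to subspaces remain linear, which gives \ref{prop:v}. So $\ker\mathfrak{R} \preccurlyeq \mathcal{F}$ as a subpresheaf of vector spaces.

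Now the sheaf property \ref{prop:iii}. Let $f \in \ker\mathfrak{R}[a,c]$ and $g \in \ker\mathfrak{R}[b,d]$ with $a \le b \le c \le d$ (strict inequality $b<c$ in the $\Intp$ case) and $f\rest{[b,c]} = g\rest{[b,c]}$. Since $\mathcal{F}$ is a sheaf, there is a unique $h \in \mathcal{F}[a,d]$ with $h\rest{[a,c]} = f$ and $h\rest{[b,d]} = g$. Uniqueness inside $\ker\mathfrak{R}$ follows from uniqueness inside $\mathcal{F}$.

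It remains to show $h \in \ker\mathfrak{R}[a,d]$. By naturality,
\[
    (\mathfrak{R}_{[a,d]}h)\rest{[a,c]} = \mathfrak{R}_{[a,c]} f = 0
    \qquad\text{and}\qquad
    (\mathfrak{R}_{[a,d]}h)\rest{[b,d]} = \mathfrak{R}_{[b,d]} g = 0 .
\]
So $\mathfrak{R}_{[a,d]}h$ and $0 \in \G[a,d]$ are both gluings in $\G$ of the compatible pair $0 \in \G[a,c]$ and $0 \in \G[b,d]$. The uniqueness clause of \ref{prop:iii} for the sheaf $\G$ then forces $\mathfrak{R}_{[a,d]}h = 0$.

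The only subtle point is this last step: it needs uniqueness of gluing in $\G$, and not merely that $\G$ is a presheaf. In the $\Int$ case with $b=c$, I would check that the same argument goes through verbatim, since the axioms were stated with "$\le$". Finally, being a subpresheaf that is a sheaf gives $\ker\mathfrak{R} \subsheaf \mathcal{F}$ by the definition of subsheaf.
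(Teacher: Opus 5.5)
Your proof is correct and follows the same route as the paper: you glue $f$ and $g$ in $\mathcal{F}$, use naturality to see that $\mathfrak{R}_{[a,d]}h$ restricts to zero on $[a,c]$ and on $[b,d]$, and conclude $\mathfrak{R}_{[a,d]}h = 0$ from uniqueness of gluing in $\G$. Your extra check that $\ker\mathfrak{R}$ is a subpresheaf of vector spaces is harmless and correct; the paper simply builds this into its definition of the kernel.
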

    \begin{proof}
        Let $a \leq b < c \leq d$ in the case of $\Intp$-sheaves and $a \leq b \leq c \leq d$ in the case of $\Int$-sheaves. Let $f \in \ker \mathfrak{R}[a, c] \subseteq \mathcal{F}[a, c]$ and $g \in \ker \mathfrak{R}[b, d] \subseteq \mathcal{F}[b, d]$ be compatible, i.e.,
        \[
            f\rest{[b, c]} = g\rest{[b, c]}.
        \]
        Since $\mathcal{F}$ is a sheaf, there exists a unique glued function $e \in \mathcal{F}[a, d]$ that restricts to $f$ on $[a, c]$ and to $g$ on $[b, d]$. It remains to show $\mathfrak{R}e = 0 \in \G[a, d]$. We have
        \[
            (\mathfrak{R}e) \rest{[a, c]} = \mathfrak{R}(e \rest{[a, c]}) = \mathfrak{R}f = 0\in\G[a, c]
        \]
        and analogously $(\mathfrak{R}e) \rest{[b, d]} = 0 \in \G[b, d]$. Since $(\mathfrak{R}e) \rest{[a, c]} = 0\in\G[a, c]$ and $(\mathfrak{R}e) \rest{[b, d]} = 0\in\G[b, d]$ are compatible and $\G$ is a sheaf, they uniquely glue to
        \[
            \mathfrak{R}e = 0. \qedhere
        \]
    \end{proof}

    \begin{Example}
        \begin{enumerate}[label=(\roman*)]
            \item The derivative $\ddt$ is a sheaf morphism
            \[
                \ddt \colon \CIntp{L, M, N} \to \CIntp{\max\{L - 1, -1\}, \max\{M - 1, -1\}, N - 1}, \qquad N \geq 1.
            \]
            It is also a sheaf morphism
            \[
                \ddt \colon \CInt{L, M, N} \to \CInt{\max\{L - 1, -1\}, \max\{M - 1, -1\}, N - 1}, \qquad N \geq 1,
            \]
            when setting it on degenerate intervals $[a, a]$ to
            \[
                \ddt \colon \{a\} \times \R^{L + 1} \to \{a\} \times \R^{\max\{L,\, 0\}},\ (a, x_0, \dots, x_L) \mapsto (a, x_1, \dots, x_{L}).
            \]
            In particular, for $L = -1$, $\ddt$ is the identity on degenerate intervals $[a, a]$ as it just maps $(a)$ to $(a)$. Note that $(a)$, or more precisely, $\{(a)\}$, is the constant zero function $[a, a] \to \R^0$.

            \item Let $R(\ddt)$ be a differential polynomial of degree $d$ with coefficients of sufficient regularity, e.g., $R_i \colon \R \to \R^{m \times n}$ so that
            \[
                R_i\rest{[a, b]} \in \left( \CIntp{L, M, N} \right)^{m \times n}[a, b] \qquad \text{for all $[a, b] \in \Intp$};
            \]
            by abuse of notation, we simply write $R_i \in \left( \CIntp{L, M, N} \right)^{m \times n}(\R)$. Then, for all $-1 \leq L' \leq M' \leq N' \in \{d, d+1, \ldots, \infty\}$, $R(\ddt)$ defines a sheaf morphism
            \[
                R(\ddt) \colon \left( \CIntp{L', M', N'} \right)^n \to \left( \CIntp{L'', M'', N''} \right)^m,
            \]
            where
            \[\arraycolsep=0pt\begin{array}{rcl}
                L'' {}&{} \coloneqq {}&{} \min\{L, \max\{L'-d, -1\}\}, \\
                M'' {}&{} \coloneqq {}&{} \min\{M, \max\{M'-d, -1\}\}, \\
                 N'' {}&{} \coloneqq {}&{} \min\{N, N'-d\}.
            \end{array}\]
            To extend this sheaf morphism to $\Int$-sheaves, one could define addition and multiplication for jets. Instead, we make use of the fact that
            \[
                \CIntp{L'', M'', N''} \subsheaf \CIntp{-1, -1, 0},
            \]
            and consider $R(\ddt)$ as a morphism
            \[
                R(\ddt) \colon \left(  \CIntp{L', M', N'} \right)^n \to \left( \CIntp{-1, -1, 0} \right)^m.
            \]
            Now, we may weaken our requirement of regularity to $R_i \in \left( \CIntp{-1, -1, 0} \right)^{m \times n}(\R)$. Most of all, $R(\ddt)$ has a unique extension to a morphism of $\Int$-sheaves
            \[
                 R(\ddt)_{\Int} \colon \left(  \CInt{L', M', N'} \right)^n \to \left( \CInt{-1, -1, 0} \right)^m
            \]
            as every $(a, x_0, \dots, x_{L'}) \in \left(  \CInt{L', M', N'} \right)^n[a, a] = \{a\} \times \left(\R^n\right)^{L'+1}$ can only be mapped to $0 = (a) \in \left( \CInt{-1, -1, 0} \right)^m[a, a] = \{a\} \times \left(\R^m\right)^0$. In particular,
            \[
                \ker R(\ddt)_\Int  = \ext_{L'} \left( \ker R(\ddt)\right).
            \]
            Since $R(\ddt)$ can be defined on many different domains such as $\left(\CIntp{N}\right)^n$, $\left(\CIntp{L, M, N}\right)^n$ or $\left(\CInt{L, M, N}\right)^n$, we simplify our notation and write
            \[
                \ker^{N}_\Intp R(\ddt), \quad \ker^{L, M, N}_\Intp R(\ddt) \quad \text{or} \quad \ker^{L, M, N}_\Int R(\ddt)
            \]
            for the respective kernels. Using this notation, $\B_R = \ext_L \left( \ker^{L, M, N}_\Intp R(\ddt) \right)$ from Section~\ref{sec:loc_sol} satisfies
            \[
                \B_R = \ker^{L, M, N}_\Int R(\ddt). \qedhere
            \]
        \end{enumerate}

    \end{Example}

    \begin{Remark}
        For Subsection~\ref{subsec:tnnf}, stronger regularity assumptions are required. We elaborate on the precise reasons in that subsection, but in short, we must ensure that the derivative defines an endomorphism
        \[
            \ddt \colon \mathcal{F} \to \mathcal{F}.
        \]
        The only sheaves $\mathcal{F}$ for which this is possible are
        \[
            \CIntp{-1, -1, \infty}, \quad \CIntp{-1, \infty, \infty}, \quad \text{and} \quad \CIntp{\infty, \infty, \infty}.
        \]
        Among these, $\CIntp{-1, -1, \infty}$ is the largest and the only one that contains real meromorphic functions. Hence, our differential operators will always be defined on $\left(\CIntp{-1, -1, \infty}\right)^n$.

        Nevertheless, we consider solutions in $\ker^{L, \infty, \infty}_\Intp R(\ddt)$, since we want to specify the regularity of our solutions via $L$, and the condition $M = \infty$ ensures sufficient boundary regularity. The latter is used in Lemma~\ref{lemma:cont_degree} to ensure the \hyperref[def:continuation]{continuation property}. As we demonstrated in Theorem~\ref{thm:contr}~\ref{thm:contr:(i)}, the \hyperref[def:continuation]{continuation property} is a necessary condition for controllability and, consequently, for all controllability results in Subsection~\ref{subsec:tnnf}.
    \end{Remark}

    \subsection{Controllability of time-varying DAEs}\label{subsec:tnnf}

    \noindent
    In this subsection, we investigate controllability of dynamical systems with real meromorphic coefficients. A dynamical system with real meromorphic coefficients is of the form
    \[
        R(\ddt) w = \sum_{i = 0}^N R_i \ddtn{i} w = 0,
    \]
    where $R_i \in \M^{m \times n}$. By regarding real meromorphic functions as a subsheaf of $\CIntp{-1, -1, \infty}$, $R(\ddt)$ can be written as a sheaf morphism
    \[
        R(\ddt) \colon \left(\CIntp{-1, -1, \infty}\right)^n \to \left(\CIntp{-1, -1, \infty}\right)^m.
    \]
    Solutions are considered in $\ker^{L, \infty, \infty}_\Intp R(\ddt)$ or $\ker^{L, \infty, \infty}_\Int R(\ddt)$.

    \begin{Remark} \label{rem:why_inf}
        It is generally desirable to define the domain of $R(\ddt)$ as large as possible in order not to exclude any potential solutions. Ideally, one would thus consider $R(\ddt)$ as a sheaf morphism
        \[
            R(\ddt) \colon \left(\CIntp{-1, -1, N}\right)^n \to \left(\CIntp{-1, -1, 0}\right)^m,
        \]
        where $N$ is the degree of the differential operator $R(\ddt)$.

        However, to enable the application of algebraic methods, it is necessary to consider factorizations of the form $R(\ddt) = A(\ddt) B(\ddt)$. For instance, one would expect the identity
        \[
            \begin{bmatrix}
                1 &  & \\
                & 1 & \\
                & & 1
            \end{bmatrix}
            =
            \begin{bmatrix}
                1 &  & \\
                & -\ddtn{j} & 1 \\
                & 1 & 0
            \end{bmatrix}
            \begin{bmatrix}
                1 &  & \\
                & 0 & 1 \\
                & 1 & \ddtn{j}
            \end{bmatrix}, \qquad \text{for all } j \in \N,
        \]
        to hold under standard matrix multiplication rules. We explain later how such differential operators $R(\ddt)$ can be represented as matrices over an appropriate noncommutative ring.

        The example above illustrates that the inequality
        \[
            \deg R(\ddt) \leq \deg A(\ddt) + \deg B(\ddt)
        \]
        holds in general. However, the sum on the right-hand side can be arbitrarily large. Consequently, it is in general not possible to apply $B(\ddt)$ and then $A(\ddt)$ to a section in $\CIntp{-1, -1, N}$. To avoid this problem, we set $N = \infty$.
    \end{Remark}

    We now recall a key algebraic tool for the analysis of dynamical systems with real meromorphic coefficients. First, we define our operators as matrices over a skew polynomial ring $\M[D]$. The ring $\M[D]$ consists of polynomials in the indeterminate $D$ with coefficients in $\M$, with usual addition and multiplication defined by
    \[
        D \cdot f = f \cdot D + \ddt f, \qquad \text{for } f \in \M.
    \]
    Under this identification, the structure of matrices $R(D) \in \M[D]^{m \times n}$ corresponds to that of operators
    \[
        R(\ddt) \colon \left(\CIntp{-1, -1, \infty}\right)^n \to \left(\CIntp{-1, -1, \infty}\right)^m,
    \]
    via substituting $D$ for $\ddt$. This formally defines the multiplication rule that was used in Remark~\ref{rem:why_inf}

    The ring $\M[D]$ is both a left and right Euclidean domain, with the degree given by the usual polynomial degree (\citet[Chapter~0.8]{free_rings}). In particular, $\M[D]$ is a left and right principal ideal domain, meaning that every left or right ideal is generated by a single element.

    For $R(D)\in\MDmn$, this property enables a specific type of factorization known in the literature as the Jacobson form (\citet{zerz}) or the Teichmüller-Nakayama form (\citet{part1}). The factorization theorem has undergone several generalizations and refinements, as explained in \citet[Chapter 8]{free_rings}, which accounts for the varied nomenclature.

    Before stating the factorization theorem, we clarify the necessary notions. A polynomial matrix $A(D)\in\MDmn$ is called \emph{left invertible} (resp.\ \emph{right invertible}) if and only if there exists~a polynomial matrix $B(D)\in\MD{}{n\times m}$ such that $B(D)A(D) = I_n$ (resp.\ ${A(D)B(D) = I_m}$). A polynomial matrix $A(D)\in\MD{}{n\times n}$ is called \emph{unimodular} if and only if there exists a polynomial matrix $B(D)\in\MD{}{n\times n}$ such that $A(D)B(D) = B(D)A(D) = I_n$.

    \begin{Theorem}[Teichmüller-Nakayama form, {\cite[Chapter~8]{free_rings}}]
        \label{thm:tnnf}
		For any ${R(D)\in\MDmn}$ with $\rk\!_\MD{}{} R(D) = \ell$, there exist unimodular matrices $U(D), V(D)$ over $\MD{}{}$ with sizes $m$ and $n$, respectively, such that
		\begin{equation}\label{eq:tnnf}
			R(D) = U(D)^{-1} \begin{bmatrix}I_{\ell-1}&&\\&r(D)&\\&&0_{(m-\ell)\times(n-\ell)}\end{bmatrix} V(D)^{-1},
		\end{equation}
		where $r(D)\in\MD{}{}$ is nonzero and of unique degree.
	\end{Theorem}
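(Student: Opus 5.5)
The plan is to derive the statement from the general theory of matrices over a principal ideal domain. By the cited result of \citet[Chapter~0.8]{free_rings}, $\M[D]$ is a left and right Euclidean domain with respect to the polynomial degree, and we work with that. The goal is to show that elementary row and column operations (which are unimodular) bring $R(D)$ to the stated shape.

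\emph{Step 1: diagonal reduction.} Proceed by induction on $m+n$. If $R(D)=0$ there is nothing to prove, so assume $R(D)\neq 0$. Among all matrices obtainable from $R(D)$ by unimodular row and column operations, choose one with a nonzero entry of minimal degree, and permute that entry to position $(1,1)$. Left division of the entries in the first column by this pivot, followed by row operations, leaves remainders of strictly smaller degree. By minimality these remainders must vanish. Right division along the first row, with column operations, works the same way. The result is $\operatorname{diag}(a_1, R'(D))$, and the induction hypothesis applied to $R'$ gives a diagonal form $\operatorname{diag}(a_1,\dots,a_\ell,0)$. Since unimodular factors preserve rank over the skew field of fractions, the number of nonzero diagonal entries equals $\ell=\rk R(D)$.

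\emph{Step 2: collapsing the diagonal to $I_{\ell-1}\oplus r$.} This step uses the fact that $\M$ is a differential field with nonconstant elements; for instance $t\in\M$ has $\ddt t=1$. The classical argument, following Cohn and going back to Teichmüller and Nakayama, proceeds two diagonal entries at a time. It shows that $\operatorname{diag}(a,b)$ is equivalent to $\operatorname{diag}(1,c)$ for some $c$ with $\deg c=\deg a+\deg b$.

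To do this, first find $f\in\M$ such that the left ideal generated by $a$ and $bf$ (more precisely, the appropriate one-sided combination) is the whole ring. Next, add $f$ times the second column to the first, which produces a row containing $a$ and $bf$. Then run the Euclidean algorithm to place a unit $1$ in the corner, and clear the rest of the row and column.

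The existence of such an $f$ relies on $\M[D]$ being a \emph{simple} ring: $\M$ has characteristic zero and the derivation is non-inner, since $\ddt\neq 0$ on $\M$. In a simple principal ideal domain, every nonzero element is a total divisor only when it is a unit, and this forces all invariant factors except the last to be units. I expect this to be the main obstacle. Proving simplicity amounts to showing that a nonzero two-sided ideal contains an element of degree $0$, which is done by taking a commutator $[\,\cdot\,,t]$ of a minimal-degree element of the ideal, which lowers the degree. Deducing from simplicity that the invariant factors are units needs care with the one-sided divisibility notions. Iterating the reduction over the $\ell$ nonzero entries yields \eqref{eq:tnnf}.

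\emph{Step 3: uniqueness of $\deg r$.} Here I plan to use a degree-type invariant. For a square nonsingular matrix $A$, the Dieudonné determinant of $A$ over the skew field of fractions has a well-defined degree. Equivalently, the dimension of $\M[D]^{1\times k}/\M[D]^{1\times k}A$ over $\M$ is an invariant; for $A=\operatorname{diag}(I,r)$ it equals $\deg r$. Applied to a full-rank $\ell\times\ell$ block of the reduced form, extracted via the unimodular factors and the quotient module $\M[D]^{1\times n}/\M[D]^{1\times m}R(D)$ modulo its free part, this shows that $\deg r$ is the $\M$-dimension of the torsion submodule. The torsion submodule is intrinsic to $R(D)$, so $\deg r$ is unique.
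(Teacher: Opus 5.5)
Your Steps~1 and~3 are sound. The Euclidean diagonalisation is standard, your proof of simplicity via the commutator with $t$ is correct, and identifying $\deg r$ with the $\M$-dimension of the torsion submodule of $\M[D]^{1\times n}/\M[D]^{1\times m}R(D)$ is a clean proof that the degree is unique. The overall route (diagonalisation over a principal ideal domain, simplicity of $\M[D]$, module-theoretic invariance) is also the standard one behind the result of Cohn that the paper cites; the paper gives no proof of its own.

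The gap is in Step~2, which carries the actual content of the theorem. You assert that there is $f\in\M$ with $\M[D]a+\M[D]bf=\M[D]$, but simplicity does not give this. What simplicity gives is weaker. If $a$ is not a unit and $b\neq0$, then $b\,\M[D]\not\subseteq\M[D]a$, since otherwise the nonzero two-sided ideal $\M[D]\,b\,\M[D]$ would lie in the proper left ideal $\M[D]a$. So some $x$ satisfies $bx\notin\M[D]a$. This makes $\M[D]a+\M[D]bx$ strictly larger than $\M[D]a$, but not necessarily equal to $\M[D]$. Your alternative, that simplicity forces all invariant factors but the last to be units, presupposes a diagonal form with total divisibility $a_1\,\|\,a_2\,\|\cdots$. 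Step~1 does not produce this: your minimal-degree argument yields a diagonal matrix with no relation among the $a_i$.

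Both issues disappear if you add one more move to the minimal-degree argument of Step~1, and no two-at-a-time step is then needed. Suppose you have reached $\operatorname{diag}(a_1,R'(D))$ with $\deg a_1$ minimal among nonzero entries of all matrices equivalent to $R(D)$. Suppose further that $\ell\ge2$ (so $R'\neq0$) and that $a_1$ is not a unit.

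\begin{itemize}
\item Take a nonzero entry $b$ of $R'$, and $x$ with $bx\notin\M[D]a_1$ as above.
\item Add the column containing $b$, multiplied on the right by $x$, to the first column.
\item Run the Euclidean algorithm on the first column by row operations. This puts into the corner a generator $d$ of a left ideal strictly containing $\M[D]a_1$.
\item Hence $a_1=ud$ with $u$ a non-unit, so $\deg d<\deg a_1$, contradicting minimality.
\end{itemize}

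Thus $a_1$ is a unit and can be normalised to $1$. Induction on $\ell$ then gives $\operatorname{diag}(I_{\ell-1},r,0)$ directly. This is exactly where total divisibility and simplicity meet in Cohn's argument.

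Incidentally, your commutator computation shows that $x$ can even be chosen in $\M$. The left ideal $\sum_{f\in\M}\M[D]bf$ is stable under $g\mapsto gt-tg$. Hence it contains $k!\,b_k\in\M\setminus\{0\}$, where $k=\deg b$ and $b_k$ is the leading coefficient of $b$. Even so, this only enlarges the ideal strictly; it does not make it all of $\M[D]$ in one step.
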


    \begin{Remark} \label{rem:tnnf}
        For this remark, let $L \in \{-1, 0, \dots, \infty\}$ be arbitrary but fixed.
        \begin{enumerate}[label=(\roman*)]
            \item \label{rem_(i)} If $A(D)\in\MDmn$ is left invertible, then
            \[
                \ker^{L, \infty, \infty}_\Intp A(\ddt) = 0.
            \]
            Here,~$0$ denotes the \emph{zero sheaf} which only consists of the sections constantly zero.
            \item \label{rem_(ii)} If $A(D)\in\MDmn$ is left invertible and $B(D)\in\MD{}{n\times p}$, then
            \[
                \ker^{L, \infty, \infty}_\Int A(\ddt)B(\ddt) = \ker^{L, \infty, \infty}_\Int B(\ddt).
            \]
            \item \label{rem_(iii)} If $R(D)\in\MDmn$ is factorized as in~\eqref{eq:tnnf}, and
            \[
                S(D)= \left[
        		\begin{array}{@{}c|c@{}}
        			\begin{matrix}
        				I_{\ell-1}&\\&r(D)
        			\end{matrix}& 0_{\ell\times (n-\ell)}
        		\end{array}\right],
            \]
            then
            \[
                \ker^{L, \infty, \infty}_\Int R(\ddt) = \ker^{L, \infty, \infty}_\Int S(\ddt)V^{-1}(\ddt).
            \]
            This can be seen by using \ref{rem_(ii)}.
            \item \label{rem_(iv)} If $R(D) \in \MDmn$ is factorized as in \eqref{eq:tnnf} and $R(D)$ is left invertible or right invertible, then $r(D)$ must be a polynomial of degree zero, i.e., $r(D) = r \in \M\setminus\{0\}$. We demonstrate this for the case where $R(D)$ is right invertible.

            Consider the matrix
            \[
            S(D) \coloneqq \begin{bmatrix}
                I_{\ell-1} & & \\
                & r(D) & \\
                & & 0_{(m-\ell)\times(n-\ell)}
            \end{bmatrix}.
            \]
            Since $R(D)$ is right invertible, $S(D)$ must also be right invertible. The right inverse of $S(D)$ has an entry $b(D) \in \M[D]$ such that $r(D)b(D) = 1$. In $\M[D]$, the polynomial degree function satisfies
            \[
            \deg r(D) + \deg b(D) = \deg (r(D)b(D)) = \deg 1 = 0.
            \]
            Thus, $\deg r(D) \leq 0$, and with $r(D) \neq 0$, we conclude that $r(D)$ is of degree zero. \qedhere
        \end{enumerate}
    \end{Remark}

    \begin{Lemma}\label{lemma:cont_degree}
        Let $R(D) \in \M[D]^{m \times n}$ be right invertible. Then, the local behaviour $\ker^{L, \infty, \infty}_\Int R(\ddt)$ satisfies the \hyperref[def:continuation]{continuation property} (Definition~\ref{def:continuation}) for all $L \geq -1$.
    \end{Lemma}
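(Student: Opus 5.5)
The plan is to use right invertibility directly, through a right inverse, rather than through the Teichmüller-Nakayama form. Since $R(D)$ is right invertible, there is $X(D)\in\MD{}{n\times m}$ with $R(D)X(D)=I_m$. Via the identification of $\M[D]$ with operators on $\left(\CIntp{-1,-1,\infty}\right)^n$, this gives $R(\ddt)X(\ddt)e=e$ for every $e\in\left(\CIntp{-1,-1,\infty}\right)^m$. By symmetry it suffices to construct the left continuation $f\in\ker^{L,\infty,\infty}_\Int R(\ddt)[a-\varepsilon,a]$ of a given $g\in\ker^{L,\infty,\infty}_\Int R(\ddt)[a,b]$. The right continuation at $b$ is handled in the same way.

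\emph{Step 1 (smooth, non-solving extension).} Because $g\in\left(\CIntp{L,\infty,\infty}\right)^n[a,b]$ has $M=\infty$, there is $\delta>0$ such that $g\rest{[a,a+\delta]}\in\Cinf([a,a+\delta],\R^n)$, smooth up to the endpoint $a$. By Borel's lemma (or Seeley's extension theorem) it extends to $\hat g\in\Cinf([a-\varepsilon,a+\delta],\R^n)$. Since the poles of the finitely many meromorphic entries of $R(D)$ and $X(D)$ are discrete, I shrink $\varepsilon,\delta$ so that none of them lies in $[a-\varepsilon,a+\delta]\setminus\{a\}$. The point $a$ itself may be a pole.

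\emph{Step 2 (correction).} Set $e\coloneqq R(\ddt)\hat g$. On $(a,a+\delta)$ we have $e=0$, because $\hat g=g$ is a solution there. Near $a$, every entry of $e$ has the form $\sum_k (t-a)^{-k}h_k(t)$ with $h_k$ smooth on all of $[a-\varepsilon,a+\delta]$. This forces $e=(t-a)^{-K}h$ for a single smooth $h$ that vanishes on $(a,a+\delta)$, so $h$ is flat at $a$. A flat smooth function divided by a power of $(t-a)$ is again smooth and flat at $a$. Hence $e$ extends smoothly across $a$, vanishes to all orders at $a$, and is identically zero to the right of $a$. The same argument applied to $X(\ddt)e$ shows it is also smooth on $[a-\varepsilon,a]$ and flat at $a$: meromorphic coefficients times derivatives of a flat function give a flat function. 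Now define
\[
f\coloneqq \bigl(\hat g - X(\ddt)e\bigr)\rest{[a-\varepsilon,a]}.
\]
Then $R(\ddt)f=e-R(\ddt)X(\ddt)e=0$, so $f$ is a solution. Moreover, $f$ lies in $\Cinf([a-\varepsilon,a],\R^n)\subseteq\left(\CIntp{L,\infty,\infty}\right)^n[a-\varepsilon,a]$.

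\emph{Step 3 (compatibility).} All derivatives of $X(\ddt)e$ vanish at $a$. Therefore $f^{(k)}(a)=\hat g^{(k)}(a)=g^{(k)}(a)$ for every $k$, so $f\rest{[a,a]}=g\rest{[a,a]}$ in $\ext_L$ for every $L\le\infty$. The sheaf property of $\ker^{L,\infty,\infty}_\Int R(\ddt)=\ext_L\bigl(\ker^{L,\infty,\infty}_\Intp R(\ddt)\bigr)$ then glues $f$ and $g$. Alternatively, one can simply observe that the glued function lies in $\CIntp{L,\infty,\infty}$ and solves the equation away from the finite set $\{a\}$.

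The main obstacle is Step 2 at a point $a$ where some coefficient has a pole. There the smooth extension $\hat g$ only yields a meromorphic-times-smooth residual $e$, and one must verify carefully that $e$ and $X(\ddt)e$ are genuinely smooth and flat from the left. The key fact is that a smooth function vanishing on a one-sided neighbourhood is flat, and flatness survives multiplication by negative powers of $(t-a)$ and differentiation. Once this is in place, the rest is routine. The hypothesis $M=\infty$ is used exactly to obtain smoothness of $g$ up to the endpoint $a$ in Step 1.
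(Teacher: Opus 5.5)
Your proof is correct, and it takes a genuinely different route from the paper.

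\textbf{The paper's route.} The paper works through the Teichm\"uller--Nakayama form $R=U^{-1}SV^{-1}$. It uses Remark~\ref{rem:tnnf}~\ref{rem_(iv)} to see that $S$ is a meromorphic matrix, so that $w=V(\ddt)^{-1}s$ satisfies $w_1=\dots=w_m=0$ and has free components $w_{m+1},\dots,w_n$. These free components are extended by clearing the poles at $a,b$ with $(t-a)^{n_a}(t-b)^{n_b}$, extending smoothly, and dividing back. The continuation is then $\hat s=V(\ddt)\hat w$. Its smoothness near $a,b$ follows because its only possible singularities are poles, and $\hat s\rest{[a,b]}=s$ is smooth. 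The paper also treats only smooth solutions in $\ext_\infty\bigl(\ker^\infty_\Intp R(\ddt)\bigr)$ first, and transfers the result to $\ker^{L,\infty,\infty}_\Int R(\ddt)$ via Proposition~\ref{prop:presheaves_ftw} and sheafification.

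\textbf{Your route.} You use nothing but the definition of right invertibility. You extend the end piece of $g$ arbitrarily by Borel's lemma, which is where $M=\infty$ enters. You then cancel the residual $e=R(\ddt)\hat g$ with the right inverse, setting $f=\hat g-X(\ddt)e$. The analytic core is the flatness of $e$ at $a$. A smooth function vanishing on a one-sided neighbourhood is flat, and flatness survives division by $(t-a)^k$, differentiation, and multiplication by analytic functions.

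\textbf{Comparison.} Your argument handles a pole of the coefficients at the endpoint explicitly, a point the paper's pole argument treats rather tersely. It also gives a continuation that is even $\Ca^\infty$-compatible at $a$. Because you work directly with the end pieces of $g$, all $L\geq -1$ are covered at once, without the detour through $\mathcal{S}^\infty_R$ and sheafification. Your extend-and-correct argument is shorter, self-contained, and independent of the normal form. The paper's route buys structural information instead. It exhibits solutions as images $V(\ddt)\hat w$ of free latent variables. This same mechanism is reused, with the cutoff $\chi$, in the proof of Theorem~\ref{thm:main}.
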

    \begin{proof}
        Fix $[a, b] \in \Int$ with $a < b$ and define the $\Int$-presheaf
        \[
            \mathcal{S}_R^L \coloneqq \ext_{L} \left( \ker^{\infty}_\Intp R(\ddt) \right), \qquad L \geq -1.
        \]
        We show that any $s \in \mathcal{S}_R^\infty[a, b]$ can be smoothly ($\Ca^\infty$) extended to a solution $\hat{s} \in \mathcal{S}_R^\infty[{a -\varepsilon}, b +\varepsilon]$, for some $\varepsilon > 0$. Then, by Proposition~\ref{prop:presheaves_ftw}, the \hyperref[def:continuation]{continuation property} follows for $\mathcal{S}_R^L$ and
        \[
            \ker^{L, \infty, \infty}_\Int R(\ddt) = \left( \mathcal{S}_R^L \right)^\# \qquad \text{for all $L \geq -1$}.
        \]

        Fix $s \in \mathcal{S}_R^\infty[a, b]$ and let $R(D) = U(D)^{-1} S(D) V(D)^{-1}$ be factorized as in \eqref{eq:tnnf}, where
        \[
            S(D) \coloneqq \left[
    		\begin{array}{@{}c|c@{}}
			    \begin{matrix}
				    I_{m-1}&\\&r(D)
			    \end{matrix}& 0_{m\times (n-m)}
		      \end{array}\right].
        \]
        Here, we used that $R(D)$ has full row rank. By Remark~\ref{rem:tnnf}~\ref{rem_(iv)}, $r(D)$ is a nonzero real meromorphic function $r(D) = r \in \M \setminus\{0\}$. Hence, $S(D)$ is a right invertible real meromorphic matrix $S(D) = S \in \M^{m \times n}$.
        By Remark~\ref{rem:tnnf}~\ref{rem_(iii)}, $s$ satisfies
        \[
            S V(\ddt)^{-1} s = 0.
        \]
        Defining $w \coloneqq V(\ddt)^{-1} s \in \left(\CIntp{-1, -1, \infty}\right)^n[a, b]$ yields ${S w = 0 }$. Our aim is to extend $w$ to a solution $\hat{w} \in \left(\CIntp{-1, -1, \infty}\right)^n[a - \varepsilon, b + \varepsilon]$ satisfying $S\hat{w} = 0$, in such a way that
        \[
            \hat{s} \coloneqq V(\ddt) \hat{w} \in \mathcal{S}_R^\infty[a - \varepsilon, b + \varepsilon],
        \]
        i.e., $\hat{s}$ is a smooth extension of $s$.

        From $S w = 0$, we know that for $w = (w_1, \dots, w_n)$, it follows that $w_1 = \dots = w_{m-1} = w_m = 0$, and $w_{m+1}, \dots, w_n$ are free. We construct the continuation $\hat{w} = (\hat{w}_1, \dots, \hat{w}_n)$ of $w$ as follows: Clearly, one must choose $\hat{w}_1 = \dots = \hat{w}_m = 0$ so that $S \hat w = 0$ is still satisfied. Since $\hat{w}_{m+1}, \dots, \hat{w}_n$ are free, the remaining challenge for the continuation is to ensure that $\hat{s} \coloneqq V(\ddt) \hat{w}$ is smooth in $a$ and $b$. This is done by determining the orders $n_a$ and $n_b$ of the poles at $a$ and $b$ for each~$w_j$, where $j = m+1, \dots, n$. A smooth continuation of
        \[
            t \mapsto w_j(t) (t-a)^{n_a} (t-b)^{n_b}
        \]
        can then be chosen, and dividing by $(t-a)^{n_a} (t-b)^{n_b}$ yields the extension $\hat{w}_j$.

        We show that this extension is suitable. First, note that by construction, the only singularities of $\hat w$ in $[a - \varepsilon, b + \varepsilon]$ are poles, because the singularities in $[a, b]$ cannot arise from differentiation but only from multiplication with the meromorphic entries of $V(\ddt)^{-1}$. By construction of our extension, this remains true in the extended interval $[a - \varepsilon, b + \varepsilon]$. In particular, $\hat{w}$ can be written as $\hat{w} = g \cdot \hat{h}$, where $g \in \M$ and $\hat{h} \in \left(\Ca^{\infty}_\Intp\right)^n[a-\varepsilon, b+\varepsilon]$.

        It follows that there exists $P(D) \in \MD{}{n \times n}$ such that
        \[
            \hat{s} \coloneqq V(\ddt) \hat{w} = V(\ddt) (g \cdot \hat{h}) = P(\ddt) \hat{h}.
        \]
        The matrix polynomial $P(D)$ is obtained by applying the product rule to $V(\ddt) (g \cdot \hat{h})$ and then sorting terms according to the order of derivatives of $\hat{h}$.

        Since $P(\ddt)$ has meromorphic coefficients, applying it to the smooth function $\hat{h}$ can introduce at most poles. Therefore, any singularity of $\hat{s}$ in $[a-\varepsilon, b+\varepsilon]$ must be a pole. However, since $\hat{s}\rest{[a, b]} = s$ is smooth in $[a, b]$, it follows that $\hat{s}$ has no poles in $[a, b]$. Thus, $\hat{s}$ is in fact smooth on a neighbourhood $[a-\delta, b+\delta]$ for some $0 < \delta \leq \varepsilon$.

        It remains to verify that
        \[
            R(\ddt) \hat{s} \rest{[a-\delta, b+\delta]} = 0.
        \]
        We compute
        \[
            R(\ddt) \hat{s} = U(\ddt)^{-1} S V(\ddt)^{-1} V(\ddt) \hat{w} = U(\ddt)^{-1} S \hat{w}.
        \]
        By the premise, we have $S\hat{w} = 0$, which implies $R(\ddt) \hat{s} = 0$.
    \end{proof}

    Finally, the algebraic criterion for controllability is established in Theorem~\ref{thm:main}. Due to its close correspondence with the results of \citet[Theorems~6,~7]{zerz} and \citet[Theorem 3.2]{part1}, this also confirms the validity of the proposed Definition~\ref{def:contr_new} of controllability within this framework. The proof adapts the strategy of \citet[Theorem 3.2]{part1}, incorporating necessary corrections to address earlier inaccuracies.

    \begin{Theorem}\label{thm:main}
        Let $R(D) \in \MDmn$ have full row rank, $V^{-1}(D) \in \MD{}{n\times n}$ and $r(D) \in \M[D]\setminus\{0\}$ be as in \eqref{eq:tnnf}, and $L \geq \deg V^{-1}(D) + \deg r(D) - 1$. Then, the local behaviour $\ker^{L, \infty, \infty}_\Int R(\ddt)$ is controllable if and only if $R(D)$ is right invertible.
    \end{Theorem}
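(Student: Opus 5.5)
The plan is to prove the two implications separately. Throughout, I reduce via \eqref{eq:tnnf} and Remark~\ref{rem:tnnf}~\ref{rem_(iii)} to the transformed variable $w = V^{-1}(\ddt)s$, which satisfies $S(\ddt)w = 0$. Because $R(D)$ has full row rank we have $\ell = m$, so $S$ has no zero rows. I will also use repeatedly that the poles of the coefficients of $V(D)$, $V^{-1}(D)$ and $r(D)$, together with the zeros of the leading coefficient of $r(D)$, form a discrete set $\T$. Near any point outside $\T$, the operators $V(\ddt)$ and $V^{-1}(\ddt)$ map smooth functions to smooth functions, and $r(\ddt)$ is a regular scalar ODE.

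\emph{Sufficiency.} Assume $R(D)$ is right invertible. By Lemma~\ref{lemma:cont_degree}, $\ker^{L,\infty,\infty}_\Int R(\ddt)$ has the continuation property, so by Theorem~\ref{thm:contr}~\ref{thm:contr:(ii)} it suffices to prove controllability to $0$. By Proposition~\ref{prop:presheaves_ftw} (applied as in the proof of Lemma~\ref{lemma:cont_degree}), it is enough to do this for smooth solutions with $\Ca^\infty$-compatibility. Let $a<b<c<d$ and let $f$ be a section on $[a,b]$; only its end piece matters. Using the smooth extension from the proof of Lemma~\ref{lemma:cont_degree}, I extend $f$ to a solution $\hat s$ on $[a, b+\varepsilon]$ with $b+\varepsilon<c$. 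I then set $\hat w = V^{-1}(\ddt)\hat s$; by Remark~\ref{rem:tnnf}~\ref{rem_(iv)} its first $m$ components vanish and the others are free. Next I choose a subinterval $[\beta_1,\beta_2] \subseteq (b, b+\varepsilon)$ disjoint from $\T$ and a smooth cutoff $\psi$ with $\psi = 1$ on $[b,\beta_1]$ and $\psi = 0$ on $[\beta_2, c]$. The control is
\[
g \coloneqq V(\ddt)(\psi \hat w), \quad\text{extended by } 0 \text{ on } [\beta_2, c].
\]
On $[b,\beta_1]$ we have $g = \hat s$, so $g$ is smooth and agrees with $f$ to all orders at $b$. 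On $[\beta_1,\beta_2]$, $g$ is smooth because this interval avoids $\T$, and the same holds at $\beta_2$ since $\psi$ and all its derivatives vanish there; near $c$ and on $[c,d]$ everything is $0$. Finally, $R(\ddt)g = U^{-1}(\ddt) S\,\psi\hat w = 0$, because $\psi\hat w$ still has vanishing first $m$ components and $S$ is a constant (meromorphic) matrix.

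\emph{Necessity.} Assume the behaviour is controllable but $R(D)$ is not right invertible. If $\deg r(D) = 0$, then $S$ and hence $R(D) = U^{-1}SV^{-1}$ would be right invertible, so $k \coloneqq \deg r(D) \geq 1$. Since controllability holds for all $a<b<c<d$, I choose $[a,d]$ disjoint from $\T$. On $[a,b]$ I take $w = (0,\dots,0,w_m,0,\dots,0)$, where $w_m \neq 0$ is a smooth solution of $r(\ddt)w_m = 0$ (it exists because the ODE is regular on $[a,b]$). Then $f \coloneqq V(\ddt)w$ is a smooth solution on $[a,b]$. Controllability to $0$ yields a section $e = f\cup g\cup 0$ on $[a,d]$ lying in the kernel. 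This $e$ is $\Ca^\infty$ away from finitely many points, has one-sided $\Ca^\infty$ pieces because $M = \infty$, and is globally $\Ca^L$. Now put $v \coloneqq V^{-1}(\ddt)e$. Since $[a,d]\cap\T = \emptyset$ and $L \geq \deg V^{-1}(D) + k - 1$, the function $v$ is $\Ca^{k-1}$ on all of $[a,d]$ and piecewise smooth. Moreover, $r(\ddt)v_m = 0$ holds on each piece, because $S(\ddt)v = U(\ddt)R(\ddt)e = 0$ piecewise.

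The main obstacle is this final propagation step, where I must turn the piecewise equation into a global identity. The plan is to argue as follows. On each smooth piece, $v_m$ solves a regular $k$-th order linear ODE. At each junction, the values of $v_m, \dots, v_m^{(k-1)}$ match, so by uniqueness for initial value problems (Picard--Lindelöf) the pieces continue one another. Starting from $v_m = 0$ on $[c,d]$ and moving leftwards junction by junction, I conclude $v_m \equiv 0$ on $[a,d]$. This contradicts $v_m = w_m \neq 0$ on $[a,b]$. The care needed lies exactly in the regularity count: one must check that $\deg V^{-1}(D)$ derivatives are consumed in passing from $e$ to $v$, so that the hypothesis on $L$ leaves precisely $\Ca^{k-1}$-matching of $v_m$ at the junctions, and that choosing $[a,d]$ to avoid $\T$ rules out any loss of regularity or of uniqueness at the junctions.
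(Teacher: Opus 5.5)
Your proposal is correct and follows essentially the same route as the paper. For sufficiency, you combine continuation (Lemma~\ref{lemma:cont_degree}), the reduction to controllability to $0$, and a smooth cutoff applied in the coordinates $V^{-1}(\ddt)s$, where the meromorphic matrix $S$ commutes with the cutoff. For necessity, you take a nonzero smooth solution of $r(\ddt)\varphi=0$ on an interval avoiding $\T$, push it through $V(\ddt)$, and use the count $L-\deg V^{-1}(D)\geq \deg r(D)-1$ together with uniqueness to force $\varphi=0$. The only differences are cosmetic: you first reduce to smooth sections via Proposition~\ref{prop:presheaves_ftw}, and you spell out the junction-by-junction uniqueness argument that the paper compresses into ``by existence and uniqueness of solutions''.
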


    \begin{proof}
        Fix $L \geq \deg V^{-1}(D) + \deg r(D) - 1$. To simplify our notation, we write
        \[
            \mathfrak{B}_A \coloneqq \ker^{L, \infty, \infty}_\Int A(\ddt)
        \]
        for all matrix polynomials $A(D) \in \MD{}{o \times p}$ appearing in this proof.

        Let $R(D) = U(D)^{-1} S(D) V(D)^{-1}$ be factorized as in \eqref{eq:tnnf}, where
        \[
            S(D) \coloneqq \left[
    		\begin{array}{@{}c|c@{}}
			    \begin{matrix}
				    I_{m-1}&\\&r(D)
			    \end{matrix}& 0_{m\times (n-m)}
		      \end{array}\right].
        \]
        Here, we used that $R(D)$ has full row rank. By Remark~\ref{rem:tnnf}~\ref{rem_(iii)}, $\mathfrak{B}_R = \mathfrak{B}_{SV^{-1}}$.

        By Remark~\ref{rem:tnnf}~\ref{rem_(iv)}, $R(D)$ is right invertible if and only if $r(D)$ is a polynomial of degree~$0$, or equivalently, $r(D)$ is a nonzero real meromorphic function. It remains to show that $\mathfrak{B}_R$ is controllable if and only if~$r(D)\in\M\setminus\{0\}$. In order to prove this, let $\T$ be defined as the union of all poles of the entries of the coefficient matrices of $r(D)$, $V(D)$ and $V(D)^{-1}$ together with the zeros of the leading coefficient of $r(D)$. Since $\T$ is a finite union of discrete sets, it is also discrete. \medskip \\
        \noindent $\Longrightarrow:$ Seeking a contradiction, assume that $r(D)\in\MD{}{}$ is a polynomial of degree at least~$1$. By the theory of linear ordinary differential equations, for all $[a, b]\in\Intp$ with $[a, b]\cap\T = \emptyset$ and any given initial value $\varphi_t \in \CInt{\deg r(D) - 1}[t, t] = \{t\} \times \R^{\deg r(D)}$ at $t \in [a, b]$, there exists a unique solution in $\varphi\in\mathfrak{B}_{r}[a, b]$ satisfying the initial value problem
    	\[
    		r(\ddt)\varphi = 0, \qquad \varphi\rest{[t, t]} = \varphi_t,
    	\]
    	where the restriction $\varphi\rest{[t, t]}$ is evaluated in $\CInt{\deg r(D) - 1}$. In particular, with $\varphi_t \neq 0$, we obtain a nonzero solution $\varphi \in \left(\ker_\Int^{\infty} r(\ddt) \right)[a, b] \setminus \{0\}$. Note that $\varphi$ is actually smooth, i.e., $\varphi \in \CInt{\infty}[a, b]$.

    	Define
        \[
            f \coloneqq V(\ddt) \hspace{1pt}\varphi \hspace{1pt}\mathbf{e}_m,
        \]
        where $\mathbf{e}_m\in\R^n$ denotes the $m$th canonical basis vector of $\R^n$. By definition of $\T$, and since $\varphi$ is nonzero and smooth, this yields $f \in \mathfrak{B}_{R}[a, b]\setminus\{0\}$ which is also smooth.

        Now, suppose that $\mathfrak{B}_R$ is controllable. Choose $c$ and $d$ such that $b < c < d$ and $[a, d] \cap \T = \emptyset$. By assumption, $f$ is controllable to $0 \in \mathfrak{B}_R[c, d]$, i.e., there exists $g \in \mathfrak{B}_R[b, c]$ compatible with $f$ and $0$, and the glued function is $e \coloneqq f \cup g \cup 0 \in \mathfrak{B}_R[a, d]$.
        Therefore,
        \[
            S(\ddt) V(\ddt)^{-1} e = 0,
        \]
        which means
        \[
            V(\ddt)^{-1} e = (\underbrace{0,\dots,0}_{m-1 \text{ times}}, \varphi_m,\dots,\varphi_n),
        \]
        for some $\varphi_m,\dots, \varphi_n\in\CIntp{\deg r(D) - 1, \infty, \infty}[a, d]$. By existence and uniqueness of solutions, $\varphi_m$ is actually smooth and $\varphi_m \in \mathfrak{B}_r[a, d]$. By construction,
        \begin{equation}\label{eq:thm:main:1}
            \varphi_m \rest{[a, b]} = \varphi \qquad \text{and} \qquad \varphi_m\rest{[c, d]} = 0.
        \end{equation}
        Since we have uniqueness of solutions in $[a, d]$, we obtain
        \[
            \varphi_m = 0 \in \mathfrak{B}_r[a, d] \qquad \Longrightarrow \qquad \varphi = 0 \in \mathfrak{B}_r[a, b].
        \]
        This contradicts $\varphi\in\mathfrak{B}_{r}[a, b]\setminus\{0\}$. \medskip \\
        \noindent $\Longleftarrow$: Let $r(D)\in\M\setminus\{0\}$. First, we show that $\mathfrak{B}_R$ is controllable to $0$. For this, let $a < b < c < d$ and~$f\in\mathfrak{B}_R[a, b]$. By Lemma~\ref{lemma:cont_degree}, $\mathfrak{B}_R$ satisfies the \hyperref[def:continuation]{continuation property}. Hence, there exists $w^0 \in \mathfrak{B}_R[a, b + 2\varepsilon]$ with $w^0\rest{[a, b]} = f$. Here, $\varepsilon > 0$ is chosen sufficiently small such that
        \[
            b + 2\varepsilon < c, \quad  (b, b + 2\varepsilon] \cap \T = \emptyset, \quad \text{and} \quad w^0\rest{[b, b+2\varepsilon]} \in \CIntp{\infty}[b, b+2\varepsilon].
        \]
        Now, choose a function $\chi\in\Cinf(\R)$ such that
        \[
            \chi(t) = \begin{cases}
                1, &t \leq b + \varepsilon,\\
                0, &t \geq b + 2\varepsilon,
            \end{cases}
        \]
        and define $w  \in \left(\CIntp{-1, -1, \infty}\right)^n[a, d]$ by $w \rest{[a, b + 2 \varepsilon]} = V(\ddt) \chi V(\ddt)^{-1} w^0$ and $w \rest{[b + 2\varepsilon, d]} = 0$. We have
        \[\arraycolsep=2pt\begin{array}{rcl}

            S(\ddt)V(\ddt)^{-1} w\rest{[a, b + 2\varepsilon]} &=& S(\ddt)V(\ddt)^{-1} V(\ddt) \chi V(\ddt)^{-1} w^0 \ab
            &=& S(\ddt) \chi V(\ddt)^{-1} w^0 \ab
            &=& \chi S(\ddt) V(\ddt)^{-1} w^0 \ab
            &=& 0,
        \end{array}\]
        where $S(\ddt) \chi = \chi S(\ddt)$ because $S(D)\in\M^{m\times n}$. Therefore, $S(\ddt)V(\ddt)^{-1} w = 0$. By construction, $w$ is of regularity $\CIntp{L, \infty, \infty}$
        \[\begin{array}{rl}
            \text{on } [a, b + \varepsilon) & \text{because $w$ is equal to $w^0$ on $[a, b + \varepsilon)$,} \\
            \text{on } (b, b + 2\varepsilon] & \text{because $V(\ddt)$, $\chi$, $V(\ddt)^{-1}$, and $w^0$ are smooth on $(b, b + 2\varepsilon]$,} \\
            \text{on } (b + 2\varepsilon, d] & \text{because $w$ is equal to $0$ on $(b + 2\varepsilon, d]$.}
        \end{array}\]
        It follows that $w \in \left(\CIntp{L, \infty, \infty}\right)^n[a, d]$, i.e., $w\in \mathfrak{B}_{S V^{-1}}[a, d] = \mathfrak{B}_R[a, d]$. Setting $g \coloneqq w\rest{[b, c]} \in \mathfrak{B}_R[b, c]$ yields that $f$, $g$ and $0 \in \mathfrak{B}_R[c, d]$ are compatible.

        We have shown that $\mathfrak{B}_R$ is controllable to $0$. By Lemma~\ref{lemma:cont_degree} and Theorem~\ref{thm:contr}~\ref{thm:contr:(iii)}, it follows that $\mathfrak{B}_R$ is controllable. This completes the proof.
    \end{proof}

    \begin{Remark}
        For fixed $R(D)$, one may ask how controllability of $\ker^{L,\infty,\infty}_{\Int} R(\ddt)$ depends on~$L$. Theorem~\ref{thm:main} shows that for sufficiently large $L$, controllability forces right invertibility of $R(D)$.

        For small values of $L$, controllability holds much more generally. For every $R(D)\in\MDmn$, the local behaviour $\ker^{-1,\infty,\infty}_{\Int} R(\ddt)$ is controllable. Another example is $r(D)=D^2$, where the smooth local solutions are affine linear functions. Any two function pieces can be connected continuously by an affine linear function on the intermediate interval. Therefore, $\ker^{0,\infty,\infty}_{\Int} r(\ddt)$ is controllable although $r(D)$ is not right invertible.

        This suggests that there is a threshold in $L$: for small $L$, controllability holds generally, whereas for large $L$, controllability imposes algebraic restrictions on $R(D)$. In the scalar case, the example $r(D)=D^2$ together with Theorem~\ref{thm:main} suggests that the threshold is given by $L=\deg r(D)-2$, where $\deg r(D)\geq 1$. We keep this formulation deliberately cautious, since we do not provide a proof in this paper. Doing so would raise further questions, in particular concerning analogous bounds for matrix polynomials $R(D)$, and would take the discussion beyond the scope of the present paper.
    \end{Remark}

    \begin{Remark}
        The lower bound on $L$ in Theorem~\ref{thm:main} depends on the chosen Teichmüller-Nakayama factorization $R(D)=U(D)^{-1}S(D)V(D)^{-1}$ since $\deg V(D)^{-1}$ need not be uniquely determined by $R(D)$. One could define
        \[
            \nu(R(D)) \coloneqq
            \min \bigset{
                \deg V(D)^{-1} }{
                R(D)=U(D)^{-1}S(D)V(D)^{-1}
                \text{ as in \eqref{eq:tnnf}} },
        \]
        and then restate Theorem~\ref{thm:main} with the optimal condition
        \[
            L \geq \nu(R(D)) + \deg r(D) - 1.
        \]
        However, as far as the author is aware, there appear to be no explicit formulae or general bounds for $\nu(R(D))$ in the existing literature.

        On the other hand, the proof in \cite[Chapter~8]{free_rings} is constructive. It produces a concrete matrix $V(D)^{-1}$ as a product of finitely many invertible matrices corresponding to elementary row and column operations. By tracking the degrees of these matrices, one obtains a computable upper bound on $\nu(R(D))$ in terms of the size and the degree of $R(D)$. We do not attempt to make this bound explicit here.
    \end{Remark}

    By choosing $L=\infty$, the lower bound in Theorem~\ref{thm:main} is satisfied independently of the chosen factorization. Therefore, we obtain as an immediate corollary a more direct translation of the controllability criteria of \cite[Theorem~3.2]{part1} and \cite[Theorems~6,~7]{zerz} in the present local setting.

    \begin{Corollary}
        Let $R(D)\in\MDmn$ have full row rank. Then, the local behaviour $\ker^{\infty,\infty,\infty}_{\Int} R(\ddt)$ is controllable if and only if $R(D)$ is right invertible.
    \end{Corollary}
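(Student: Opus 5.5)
This follows directly from Theorem~\ref{thm:main}, applied with $L = \infty$. The only thing to check is that the hypothesis on $L$ holds for some, and in fact every, Teichmüller-Nakayama factorization of $R(D)$.

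First, since $R(D)\in\MDmn$ has full row rank, Theorem~\ref{thm:tnnf} gives a factorization $R(D) = U(D)^{-1} S(D) V(D)^{-1}$ as in \eqref{eq:tnnf} with $\ell = m$. Fix any such factorization. It determines a nonnegative integer $\deg V^{-1}(D)$ and a nonzero $r(D)\in\M[D]$ with nonnegative integer degree. Hence $\deg V^{-1}(D) + \deg r(D) - 1$ is a finite integer $\geq -1$.

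Second, with the convention $L\in\{-1,0,\dots,\infty\}$ used throughout (for instance in Remark~\ref{rem:tnnf}), $L=\infty$ dominates every finite integer. So $L = \infty \geq \deg V^{-1}(D) + \deg r(D) - 1$ holds regardless of which factorization was chosen. By the construction in Subsection~\ref{subsec:sheaf_morphisms}, the local behaviour $\ker^{\infty,\infty,\infty}_\Int R(\ddt)$ is exactly the sheaf $\ker^{L,\infty,\infty}_\Int R(\ddt)$ for $L=\infty$. It is the extension $\ext_\infty$ of the $\Intp$-kernel, so compatibility at points means agreement of all derivatives.

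Finally, Theorem~\ref{thm:main} then says directly that this local behaviour is controllable if and only if $R(D)$ is right invertible. The only point requiring a moment's care is whether the proof of Theorem~\ref{thm:main} really covers $L=\infty$ and not just finite $L$. In the direction ``$\Longrightarrow$'' only the lower bound on $L$ is used, to get $\varphi_m\in\CIntp{\deg r(D)-1,\infty,\infty}$. In ``$\Longleftarrow$'' the constructed $w$ is smooth near $b$, coincides with $w^0$ there, and is identically zero beyond $b+2\varepsilon$, so it is $\Ca^\infty$-compatible at every junction. Lemma~\ref{lemma:cont_degree} is stated for all $L\geq -1$, including $\infty$. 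So no step breaks, and the corollary follows.
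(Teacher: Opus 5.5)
Your proposal is correct and takes the same approach as the paper: the corollary follows by applying Theorem~\ref{thm:main} with $L=\infty$, since this choice satisfies $L \geq \deg V^{-1}(D) + \deg r(D) - 1$ for every Teichmüller-Nakayama factorization. Your extra check that the proofs of Theorem~\ref{thm:main} and Lemma~\ref{lemma:cont_degree} go through for $L=\infty$ is not in the paper, but it is accurate and does no harm.
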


\appendix
\setcounter{section}{1}
\setcounter{equation}{0}
\setcounter{theorem}{0}
\renewcommand{\thesection}{\Alph{section}}
\renewcommand{\thesubsection}{\thesection.\arabic{subsection}}

\section*{Appendix}\label{app}

    \noindent This appendix has two purposes. First, we verify that the definition of a $\Intpp$-sheaf given in the \hyperref[sec:intro]{introduction}, based on \hyperref[prop]{Properties}~\ref{prop:i}-\ref{prop:v} and Subsection~\ref{subsec:Int_comp}, agrees with the standard definition of a sheaf. In particular, we show that requiring unique gluing of two compatible sections, as done in the main text, is equivalent to requiring unique gluing of finitely many compatible sections. Secondly, we present the deferred proof of Theorem~\ref{thm:Int&Intp}.

    We begin by providing the general definition of a $\Intpp$-sheaf. The definition of a sheaf depends on which families of sections are regarded as \eanf{overlapping}. To formalize this, we use the notion of a \emph{coverage} introduced by \citet[Part~C, Definition~2.1.1]{elephant}. A coverage generalizes the concept of open coverings in topological spaces. Roughly speaking, a coverage on $\Intpp$ assigns to every object $[a,b]$ a collection of so-called \emph{covering families} $\{[a_i,b_i]\subseteq [a,b]\}_{i\in I}$ subject to a consistency requirement: for every $[x,y]\subseteq [a,b]$ in $\Intpp$, a covering family can be pulled back to a covering family of $[x,y]$. In $\Intp$, this pullback is given by
    \begin{equation}\label{eq:pullback_Intp}
        \bigset{[a_i, b_i] \cap [x, y] \subseteq [x, y]}{i \in I \land (a_i, b_i) \cap (x, y) \neq \emptyset}
    \end{equation}
    and in $\Int$ as
    \begin{equation}\label{eq:pullback_Int}
        \bigset{[a_i, b_i] \cap [x, y] \subseteq [x, y]}{i \in I \land [a_i, b_i] \cap [x, y] \neq \emptyset}.
    \end{equation}

    \begin{Definition}
        For $k \in \{1, 2, \dots, \infty\}$, define the coverages $\mathcal{J}_{<k}^\mathrm{p}$ and $\mathcal{J}_{<k}$ on $\Intp$ and $\Int$, respectively, by setting
        \[
            \mathcal{J}_{<k}^\mathrm{p}([a, b]) \coloneqq \bigset{ \{[a_i, b_i] \subseteq [a, b]\}_{i \in I}}{\bigcup_{i \in I} (a_i, b_i) = (a, b), \text{ where } \abs{I} < k} \quad \text{for } [a, b] \in \Intp
        \]
        and
        \[
            \mathcal{J}_{<k}([a, b]) \coloneqq \bigset{ \{[a_i, b_i] \subseteq [a, b]\}_{i \in I}}{\bigcup_{i \in I} [a_i, b_i] = [a, b], \text{ where } \abs{I} < k} \quad \text{for } [a, b] \in \Int. \qedhere
        \]
    \end{Definition}

    To verify that $\mathcal{J}_{<k}^{\mathrm{(p)}}$ defines a coverage on $\Intpp$, it suffices to check that the pullbacks~\eqref{eq:pullback_Intp} and~\eqref{eq:pullback_Int} are covering families and that their size is less than $k$.

    \begin{Definition}[Sheaf, {cf.\ \cite[Part~C, Definition~2.1.2]{elephant}, \cite[\href{https://stacks.math.columbia.edu/tag/00YR}{Tag 00YR}]{stacks-project}}]\label{def:sheaf_on_site}
        ~\\
        \indent Let $\mathcal{F}\colon\Intpp\op \to \Vect$ be a $\Intpp$-presheaf. Given a family $\{[a_i, b_i] \subseteq [a, b]\}_{i \in I}$ in $\Intpp$, a family of elements $s_i \in \mathcal{F}[a_i, b_i]$, $i \in I$, is called \emph{compatible} if and only if for all $i, j \in I$ and $[x, y] \subseteq [a_i, b_i] \cap [a_j, b_j]$ in $\Intpp$, we have $s_i\rest{[x, y]} = s_j\rest{[x, y]}$.

        Let $\mathcal{J}$ be a coverage on $\Intpp$. $\mathcal{F}$ is called a \emph{$\mathcal{J}$-sheaf} if and only if for all $\mathcal{J}$-covering families $\{[a_i, b_i] \subseteq [a, b]\}_{i \in I}$ and all compatible families $s_i \in \mathcal{F}[a_i, b_i]$, $i \in I$, there exists a unique $s \in \mathcal{F}[a, b]$ such that $s\rest{[a_i, b_i]} = s_i$ for each $i \in I$.

        If the coverage $\mathcal{J}$ on the category $\Intpp$ is apparent, we may simply write \emph{$\Intpp$-sheaf} instead of $\mathcal{J}$-sheaf.
    \end{Definition}

    In the main part of this work, the $\Intpp$-sheaves considered are $\mathcal{J}_{<3}^{\mathrm{(p)}}$-sheaves, that is, sheaves admitting unique gluing of two (or fewer) compatible sections. Thus, the use of the term \eanf{sheaf} in the main text is justified with respect to the coverage $\mathcal{J}_{<3}^{\mathrm{(p)}}$.

    Arguably, a more useful notion is that of a $\mathcal{J}_{<\infty}^{\mathrm{(p)}}$-sheaf, which allows unique gluing of any finite family of compatible sections. In the following theorem, we prove that these notions are equivalent by induction. Intuitively, the argument is straightforward: if one can glue $k$ (or fewer) compatible sections, then one can also glue $k+1$ compatible sections by first gluing the first $k$ sections and then gluing the resulting section with the remaining one.

    \begin{Proposition}\label{prop:cov:equiv}
        Let $k < \ell$ in $\{3, 4, \dots, \infty\}$ and $\B$ be a presheaf on $\Intpp$. Then, $\B$ is a $\mathcal{J}_{<k}^\mathrm{(p)}$-sheaf if and only if it is a $\mathcal{J}_{<\ell}^\mathrm{(p)}$-sheaf.
    \end{Proposition}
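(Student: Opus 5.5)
We prove both implications, using the hypothesis $k\geq 3$ throughout. The direction ``$\mathcal{J}_{<\ell}^{\mathrm{(p)}}$-sheaf $\Rightarrow$ $\mathcal{J}_{<k}^{\mathrm{(p)}}$-sheaf'' is immediate. Since $k<\ell$, every $\mathcal{J}_{<k}^{\mathrm{(p)}}$-covering family is also a $\mathcal{J}_{<\ell}^{\mathrm{(p)}}$-covering family, because the only difference between the two coverages is the bound on $\abs{I}$. Hence existence and uniqueness of gluings for the larger coverage specialise to the smaller one.

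For the converse it suffices to show that a $\mathcal{J}_{<3}^{\mathrm{(p)}}$-sheaf is a $\mathcal{J}_{<\infty}^{\mathrm{(p)}}$-sheaf; this already covers every $\ell$. Indeed, a $\mathcal{J}_{<k}^{\mathrm{(p)}}$-sheaf is in particular a $\mathcal{J}_{<3}^{\mathrm{(p)}}$-sheaf by the first direction, and every $\mathcal{J}_{<\ell}^{\mathrm{(p)}}$-family is a $\mathcal{J}_{<\infty}^{\mathrm{(p)}}$-family. We argue by induction on $\abs{I}=n$ that every compatible family over a covering of size $n$ glues uniquely.

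\emph{Step 1: reductions.} The cases $n\le 2$ hold by hypothesis; for $n=1$ the single interval must equal $[a,b]$. Let $n\ge 3$. First we discard redundant members: if some $[a_j,b_j]$ is contained in another $[a_i,b_i]$, then compatibility forces $s_j=s_i\rest{[a_j,b_j]}$. So dropping index $j$ leaves a covering family of size $n-1$ with the same gluings. The induction hypothesis then gives existence, and uniqueness follows since any gluing of the reduced family automatically restricts correctly to $[a_j,b_j]$. We may therefore assume no interval contains another. Then the left endpoints are pairwise distinct, and ordering by them also orders the right endpoints strictly: $a_1<\dots<a_n$ and $b_1<\dots<b_n$. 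Moreover $a_1=a$ and $b_n=b$. The covering condition gives $a_{i+1}<b_i$ in $\Intp$, using open interiors, and $a_{i+1}\le b_i$ in $\Int$.

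\emph{Step 2: gluing.} Apply the induction hypothesis to $\{[a_i,b_i]\}_{i<n}$, which covers $[a,b_{n-1}]$ in the relevant sense; this yields a unique $t\in\B[a,b_{n-1}]$ with $t\rest{[a_i,b_i]}=s_i$. Next check that $t$ and $s_n$ are compatible on $[a_n,b_{n-1}]$. The intervals $[a_i,b_i]\cap[a_n,b_{n-1}]$, taken over the indices with nonempty (interior) intersection, form a covering family of $[a_n,b_{n-1}]$ of size $<n$; this is exactly the pullback \eqref{eq:pullback_Intp} or \eqref{eq:pullback_Int}. On each such piece, $t$ restricts to $s_i$, and $s_i$ agrees with $s_n$ there by compatibility. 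Uniqueness of gluing from the induction hypothesis, applied on $[a_n,b_{n-1}]$, then gives $t\rest{[a_n,b_{n-1}]}=s_n\rest{[a_n,b_{n-1}]}$. Now the two-piece gluing property produces $s\in\B[a,b]$ with $s\rest{[a,b_{n-1}]}=t$ and $s\rest{[a_n,b]}=s_n$, and hence $s\rest{[a_i,b_i]}=s_i$ for all $i$, using \ref{prop:ii}.

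\emph{Step 3: uniqueness.} Suppose $s$ and $s'$ both glue the family. Their restrictions to $[a,b_{n-1}]$ glue the first $n-1$ sections, so they coincide by the induction hypothesis. Their restrictions to $[a_n,b]$ both equal $s_n$. The uniqueness part of the two-piece gluing property then gives $s=s'$.

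The main obstacle is the bookkeeping in the degenerate $\Int$ case. When $a_{n}=b_{n-1}$, the overlap $[a_n,b_{n-1}]$ is a point, and compatibility there must be read through restriction to $[b,b]$; pullback families may also contain degenerate intervals. The argument must also ensure that the pulled-back family of Step 2 has strictly fewer than $n$ members, which holds because index $1$ does not meet it when $n\ge3$ after the reduction. I expect this to be handled by treating the pullback formulas \eqref{eq:pullback_Intp} and \eqref{eq:pullback_Int} separately and by noting that the two-piece gluing property in $\Int$ explicitly allows $b=c$.
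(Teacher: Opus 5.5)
Your proof is correct and follows the paper's route: order the cover so that the last interval has the largest left endpoint, glue the remaining ones by induction, and finish with the two-piece gluing property. Your redundancy reduction and explicit uniqueness step make rigorous what the paper leaves implicit. One remark in your last paragraph is false, though harmless: index $1$ can meet $[a_n,b_{n-1}]$ (take $[0,0.6],[0.3,0.8],[0.5,1]$ covering $[0,1]$), but you do not need that claim, since the pulled-back family comes only from the $n-1$ intervals with $i<n$; in fact $[a_n,b_{n-1}]\subseteq[a_{n-1},b_{n-1}]$, so compatibility of $t$ and $s_n$ already follows directly from that of $s_{n-1}$ and $s_n$.
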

    \begin{proof}
        If $\B$ is a $\mathcal{J}_{<\ell}^\mathrm{(p)}$-sheaf, then it is also a $\mathcal{J}_{<k}^\mathrm{(p)}$-sheaf, since for all $[a, b] \in \Intpp$,
        \begin{equation*}
            \mathcal{J}_{<k}^\mathrm{(p)}([a, b]) \subseteq \mathcal{J}_{<\ell}^\mathrm{(p)}([a, b]) \quad \Longleftrightarrow \quad k \leq \ell.
        \end{equation*}

        Now, suppose that $\B$ is a $\mathcal{J}_{<k}^\mathrm{(p)}$-sheaf. We show that it is also a $\mathcal{J}_{<k+1}^\mathrm{(p)}$-sheaf. To this end, consider a compatible family of sections $s_i \in \B[a_i, b_i]$ defined on a covering family
        \[
            \{[a_i, b_i] \subseteq [a, b] \mid i = 1, \dots, k\} \in \mathcal{J}_{<k+1}^\mathrm{(p)}([a, b])
        \]
        of $[a, b] \in \Intpp$. Without loss of generality, let $a_k = \max \bigset{a_i}{i=1, \dots, k}$, so that $\bigcup_{i<k}[a_i, b_i]$ is still an interval. Then, the sections $s_1, \ldots, s_{k-1}$ are uniquely glueable according to the sheaf axiom of $\B$ with respect to $\mathcal{J}_{<k}^\mathrm{(p)}$. This yields a unique section $s_{<k}$ of $\B$ on the interval $\bigcup_{i<k}[a_i, b_i]$. Now, $s_{<k}$ and $s_k$ are also compatible. Hence, they can be glued, yielding a unique section $s_{<k+1}$ on $\bigcup_{i<k+1}[a_i, b_i]$. This verifies the sheaf axiom for $\mathcal{J}_{<k+1}^\mathrm{(p)}$, i.e., $\B$ is a $\mathcal{J}_{<k+1}^\mathrm{(p)}$-sheaf.

        For finite $\ell$, the claim follows by induction. For $\ell = \infty$, it follows from
        \begin{equation*}
            \mathcal{J}_{<\infty}^\mathrm{(p)}([a, b]) = \bigcup_{k \leq m < \infty} \mathcal{J}_{<m}^\mathrm{(p)}([a, b]) \qquad \text{for all $[a, b] \in \Intpp$}. \qedhere
        \end{equation*}
    \end{proof}

    We proceed with the proof of Theorem~\ref{thm:Int&Intp}. It relies on categorical limit constructions, whose definitions are omitted; for background, see \cite{MacLane:CatsFTW} or \cite{awodey2010category}.

    \begin{proof}[Proof of Theorem~\ref{thm:Int&Intp}]
        An $\Int$-sheaf restricts to a $\Intp$-sheaf by simply restricting the underlying coverage.

        Conversely, let $\B$ be a $\Intp$-sheaf of vector spaces. Fix $b\in\R$ and set
        \[\arraycolsep=0pt\begin{array}{rcl}
            \B_{b-} {}&{} \coloneqq {}&{} \lim_{a < b} \B[a, b],\\
            \B_{b+} {}&{} \coloneqq {}&{} \lim_{c > b} \B[b, c],\\
            \B_{b} {}&{} \coloneqq {}&{} \lim_{a < b < c} \B[a, c].
        \end{array}\]
        Conceptually, the (half) stalks $\B_{b-}$ and $\B_{b+}$ consist of the left and right halves of germs. However, these halves may or may not be restrictions of (complete) germs originating from the stalk $\B_{b}$. Hence, there are (neither injective nor surjective) restriction maps $\B_{b} \to \B_{b-}$ and $\B_{b} \to \B_{b+}$.

        Define $\B[b, b]$ as the pushout of the restriction maps $\B_b \to \B_{b-}$ and $\B_b \to \B_{b+}$ in the category of real vector spaces. That is, $\B[b, b]$ is a real vector space equipped with morphisms
        \[
            \B_{b-} \to \B[b, b] \quad \text{and} \quad \B_{b+} \to \B[b, b]
        \]
        such that the square
        \[
            \begin{tikzcd}[column sep=small]
                \B_b \arrow[r] \arrow[d] & \B_{b+} \arrow[d] \\
                \B_{b-} \arrow[r] & \B[b, b]
            \end{tikzcd}
        \]
        commutes, and such that the following universal property holds: for every such vector space $V$ equipped with two morphisms $\B_{b-} \to V$ and $\B_{b+} \to V$ for which the compositions $\B_b \to \B_{b-} \to V$ and $\B_b \to \B_{b+} \to V$ are equal, there exists a unique morphism $\B[b, b] \to V$ making the diagram
        \[
            \begin{tikzcd}[column sep=small]
                \B_b \arrow[r] \arrow[d] & \B_{b+} \arrow[d] \arrow[ddr, bend left] \\
                \B_{b-} \arrow[r] \arrow[drr, bend right] & \B[b, b] \arrow[dashed, dr] \\
                & & V
            \end{tikzcd}
        \]
        commute.
        The pushout is unique up to isomorphism and can be constructed as the quotient
        \[
            \B[b, b] \coloneqq \left( \B_{b-} \oplus \B_{b+} \right) \big/ \bigset{ \left( f\rest{b-}, -f\rest{b+} \right) }{ f \in \B_b }.
        \]
        The commutative diagram
        \[\begin{tikzcd}[column sep=small, row sep=small]
            & \B[a, c] \arrow[dl] \arrow[dd] \arrow[dr] & \\
            \B[a, b] \arrow[dd] & & \B[b, c] \arrow[dd]\\
            & \B_b \arrow[dl] \arrow[dr] & \\
            \B_{b-} \arrow[dr] & & \B_{b+} \arrow[dl]\\
            & \B[b, b]
        \end{tikzcd}\]
        visualizes the relations between all vector spaces.

        Intuitively, $\B[b, b]$ contains exactly the information that $\B_{b-}$ and $\B_{b+}$ have in common via their restrictions from $\B_b$. In other words, $\B_b$ now represents pairs of sections from $\B_{b-}$ and $\B_{b+}$ that coincide on $\B[b, b]$, i.e., it is the pullback of $\B_{b-} \to \B[b, b]$ and $\B_{b+} \to \B[b, b]$. Note that this is only true because for $f \in \B_b$, $(f\rest{b-}, f\rest{b+}) = (0, 0)$ if and only if $f = 0$, i.e., there is no data loss from $\B_b$ to $\B_{b-} \oplus \B_{b+}$.

        More rigorously, for $f \in \B[a, b]$ and $g \in \B[b, c]$ with $a < b < c$, we have
        \[\begin{aligned}[b]
                {}&{}
                    \text{$f$ and $g$ are compatible in $\Int$} \ab
                \Longleftrightarrow \quad {}&{}
                    (f\rest{b-}, 0) \sim (0, g\rest{b+}) \ab
                \Longleftrightarrow \quad {}&{}
                    \exists h \in \B_b \colon (f\rest{b-}, 0) = (h\rest{b-}, g\rest{b+} - h\rest{b+}) \ab
                \Longleftrightarrow \quad {}&{}
                    \exists h \in \B_b \colon h\rest{b-} = f\rest{b-} \land h\rest{b+} = g\rest{b+} \ab
                \Longleftrightarrow \quad {}&{}
                    \exists \varepsilon > 0 \exists h \in \B[b-\varepsilon, b+\varepsilon] \colon h\rest{[b-\varepsilon, b]} = f\rest{[b-\varepsilon, b]} \land h\rest{[b, b+\varepsilon]} = g\rest{[b, b+\varepsilon]} \ab
                \Longleftrightarrow \quad {}&{}
                    \exists \varepsilon > 0 \exists h \in \B[b-\varepsilon, b+\varepsilon] \colon \text{$f$, $g$, and $h$ are compatible in $\Intp$.}
        \end{aligned}\qedhere\]
    \end{proof}

    Recall that Example~\ref{ex:counterexample_regular} considers a $\Intp$-sheaf $\B$ that is not extendable via $\ext_\infty$, i.e., using only higher-order derivatives, but can be extended by incorporating additional local data. The following example shows that the construction from Theorem~\ref{thm:Int&Intp} yields precisely this extension.

    \begin{Example}\label{ex:counterexample_regular2}
        Consider again the $\Intp$-sheaf $\B$ from Example~\ref{ex:counterexample_regular}. Following the construction of Theorem~\ref{thm:Int&Intp}, for $a < t < b$ in $\R$, we obtain
        \[\arraycolsep=0pt\begin{array}{rcl}
            \B[t, t] {}&{} \coloneqq {}&{} \bigset {(c_1 f\rest{t-}, c_2f\rest{t+})}{c_1, c_2 \in \R } \big/ \bigset{(c f\rest{t-}, -cf\rest{t+})}{c \in \R } \\
            {}&{} \cong {}&{} \bigset{((c_1 + c_2) f\rest{t-}, 0)}{c_1, c_2 \in \R } \cong \R
        \end{array}\]
        with restriction maps
        \[\arraycolsep=0pt\begin{array}{rlll}
            \B[a, b{}&{}] \to \B[t, t], \qquad c \cdot f\rest{[a, b]} {}&{} \mapsto (c f\rest{t-}, 0{}&{}) \cong c,\\
            \B[a, t{}&{}] \to \B[t, t], \qquad c \cdot f\rest{[a, t]} {}&{} \mapsto (c f\rest{t-}, 0{}&{}) \cong c,\\
            \B[t, b{}&{}] \to \B[t, t], \qquad c \cdot f\rest{[t, b]} {}&{} \mapsto (0, cf\rest{t+} {}&{})\cong c,
        \end{array}\]
        Note that $(g\rest{t-}, 0) = (0, g\rest{t+}) \in \B[t, t]$ for $g \in \B[a, b]$.
    \end{Example}

\begingroup
\makeatletter
\let\addcontentsline\@gobblethree
\section*{Acknowledgements}

I thank Prof.\ Dr.\ Achim Ilchmann and Dr.\ Jonas Kirchhoff for their continuous support throughout the development of this work. Numerous discussions and their detailed feedback on successive drafts greatly improved both the content and the structure of the paper. Their careful reading also helped to identify and correct many inaccuracies.

\bibliographystyle{abbrvnat}
\bibliography{Literatur, sources}
\endgroup

\end{document}